\newcommand \be     {\begin{equation}}
\newcommand \ee     {\end{equation}}
\newcommand \del     \partial
\DeclareMathOperator  \sgn {sgn}
\newcommand{\ess}{\,{\rm ess}}
\newcommand{\td}[1]{{\tilde{#1}}}
\def\XXint#1#2#3{{\setbox0=\hbox{$#1{#2#3}{\int}$}
\vcenter{\hbox{$#2#3$}}\kern-.5\wd0}}
\DeclareMathOperator\dive {div}
 \newcommand{\R}{\mathbb R}
\newcommand{\ic}{\,\triangledown\,}
\newcommand{\gc}{\,\square\,}
\newcommand{\Lip}{\,{\rm Lip}}
\newtheorem{theorem}{Theorem}[section]
 \newtheorem{remark}[theorem]{Remark}
\newtheorem{lemma}[theorem]{Lemma}
\newtheorem{definition}[theorem]{Definition}
 \def\beqs{\begin{eqnarray*}}
 \def\enqs{\end{eqnarray*}}
 \def\beq{\begin{eqnarray}}
 \def\enq{\end{eqnarray}}
\begin{document}
\title{Multi-Time Systems of Conservation Laws}
\author{Aldo Bazan$^1$, Paola Loreti$^2$, Wladimir Neves$^1$}

\date{}

\maketitle

\footnotetext[1]{ Instituto de Matem\'atica, Universidade Federal
do Rio de Janeiro, C.P. 68530, Cidade Universit\'aria 21945-970,
Rio de Janeiro, Brazil. E-mail: \sl aabp2003@pg.im.ufrj.br,
wladimir@im.ufrj.br}

\footnotetext[2]{ Dipartimento di Scienze di Base e Applicate per
l'Ingegneria, Sapienza Universit\`a di Roma, via A. Scarpa n.16
00161 Roma. E-mail: {\sl paolaloreti@gmail.com.}
\newline
\textit{To appear in:}
\newline
\textit{Key words and phrases.} Conservation laws, Hamilton-Jacobi
equations, Cauchy problem, multi-time partial differential
equations.}

%
%
\begin{abstract}
Motivated by the work of P.L. Lions and J-C. Rochet \cite{PLLJCR},
concerning multi-time Hamilton-Jacobi equations, we introduce the
theory of multi-time systems of conservation laws. We show the
existence and uniqueness of solution to the Cauchy problem for a
system of multi-time conservation laws with two independent time
variables in one space dimension. Our proof relies on a suitable
generalization of the Lax-Oleinik formula.
\end{abstract}
%

\maketitle


\section{Introduction} \label{IN}

This paper introduces the theory of multi-time systems of
conservation laws. Since to our knowledge nothing is already done
in this direction, we first give the statement of the theory in
Section \ref{ST}. In order to show that the theory is
well-introduced, we prove on the final section the solvability of
the Cauchy problem for a system of multi-time conservation laws
with two independent time variables in one space dimension. The
solvability relies on a generalization of the Lax-Oleinik formula
for two independent times, see Definition \ref{LOF}. Therefore, we
exploit in this paper the explicit Lax formula \eqref{LFMHJE} as
solution for the multi-time Hamilton-Jacobi system \eqref{HJS},
which concept was introduced by Rochet \cite{JCR} in the context
of mathematical economic problems.

\medskip
Besides the philosophical question of the existence of multiple
time dimensions, multi-time phenomena are rather common. For
instance, the time schedule of networks in communication theory,
as well, the traffic models with possibility to consider traffic
jam leading to the use a different time scale. Indeed, processes
that are assumed to start at the same configuration, and the
utility function has to be solution of two different optimization
problems, which coupling is just the initial data. In this
direction, we address the work of Gu, Chung and Hui \cite{GCH},
which is related to traffic flow problems in inhomogeneous
lattices. In fact, traffic flow seems to be one of the prelude
sources of conservation laws, leading for instance to Burgers
equation. Another source of interesting physical problems, where
multi-time phenomena is present, comes from general relativity and
electromagnetism. In this direction, we address the reader to the
works of Neagu and Udriste \cite{MNCU} and Stickforth \cite{JS},
the last one is concerned with the Kepler problem. Although, one
of the most amazing example which leads to multiple dimensions,
even more than two time scales, is given by the string theory, we
address the reader the books of Steven \cite{SSG} and Zwiebach
\cite{BZ}. Most of these physical problems are modelled by systems
of conservation laws, here with two or more time independent
scales. Finally, we have to mention that, one of the motivations
to introduce multi-time conservation laws, comes from the
Lions-Rochet's paper \cite{PLLJCR}, concerning multi-time
Hamilton-Jacobi equations.

\medskip
The mathematical theory of multi-time Hamilton-Jacobi equations
was developed by P.L. Lions and J-C. Rochet \cite{PLLJCR}.  In
that paper Lions and Rochet showed the existence of solution for
\eqref{HJS}. Since then, many works have been done in the context
of multi-time Hamilton-Jacobi equations to extend the results of
Lions and Rochet. The existing literature goes in the direction to
show existence and uniqueness of the solution for more general
class of Hamiltonians and to give weaker  regularity conditions on
the initial data. For instance, Barles and  Tourin \cite{BT} for
Lipschitz initial-data, Plaskacz and Quincampoix \cite{SPMQ} for
initial-data bounded by a semi-continuous function, they present
existence and uniqueness under the hypotheses (H1), (H2), (H3) in
\cite{BT} and Assumption A in \cite{SPMQ}, see Remark \ref{RM}. We
address also the paper of Imbert and Volle \cite{CIMV}, which
consider a more general class of vectorial Hamilton-Jacobi
equations.

\medskip
For our multi-time conservation laws purpose, we were here more
interested in explicit Lipschitz regular solutions for
\eqref{HJS}. Then, under the condition that the initial-data is
Lipschitz and the Hamiltonians are convex and coercive, we give an
explicit and new proof of existence for the multi-time
Hamilton-Jacobi equations, using the the $\inf$-convolution and
$\Gamma$-convolution operations. We show that Lax formula
\eqref{LFMHJE} is a Lipschitz function, which solves the Cauchy
problem \eqref{HJS}, see Theorem \ref{GHLFET}. The same strategy
used to prove Theorem \ref{GHLFET}, with small modifications,
shows also that the Lax formula is a viscosity solution of
\eqref{HJS} in the sense presented on Definition \ref{RLFVS}.
Although the section on viscosity solutions of Hamilton-Jacobi
equations gives known results in literature, here we organize the
topics in order to give the correspondence with multi-time
conservation laws. To make the paper complete on its on, we prefer
to give statements and proofs, adapted to this context. By the
doubling variables technic, we show that there exists at most one
Lipschitz, bounded solution for \eqref{HJS}, see Theorem
\ref{TUHJS}.
Hence the final Section \ref{MTCL1D} presents the existence and
uniqueness solution to the Cauchy problem \eqref{MTSCL1D}. First,
we differentiate the Lax formula with respect to the spacial
variable, and formally show that, it is the best candidate to
solve \eqref{MTSCL1D}. After that we establish in Lemma
\ref{MTLOF} a generalization of the Lax-Oleinik formula for
multi-time variables. Then, we give in Definition \ref{SDMTCL1D}
the exact notion of solution to \eqref{MTSCL1D}, and prove the
existence of an integral solution on Theorem \ref{TMTCLCP}. After
that, by the BV regularity property obtained by the Lax-Oleinik
formula, we show that the integral solution is an entropy solution
to the Cauchy problem \eqref{MTSCL1D} in the sense of Definition
\ref{ENTROPYSOLUTION}. Finally, we prove the uniqueness result on
Theorem \ref{UNQTCL}.

\subsection{Statement of the theory}\label{ST}

The aim of this section is to provide the basic theory for
multi-time systems of conservation laws in multidimensional space
dimensions. We are going to formulate the initial-value problem,
where the systems of equations is complemented by an initial data,
that is, the Cauchy problem.

\medskip
Fix $n$, $d$ and $s$ be positive natural numbers. Let
$t_1,t_2,...,t_n$ be $n$-time independent scales, and consider the
points $(t_1,\ldots,t_n,x_1,\ldots,x_d) \in \R^n \times \R^d$. In
fact, for simplicity of exposition, and without loss of
generality, we consider only two time scales. Moreover, we denote
the spacial variable $(x_1,\ldots,x_d) = x$.

\medskip
Let $U$ be an open subset of $\R^s$, usually called the set of
states, where for each $(t_1,t_2,x)$
$$
  u(t_1,t_2,x) \in U, \qquad \big(u=(u^1,\dots,u^s) \big).
$$
Now, let $f_i: U \to (\R^s)^d$, $(i=1,2)$, be two smooth maps
called flux functions. In general, we postulate that there exist
at most $f_i's$ different flux functions as the number of time
independent variables. Then, we are in position to establish the
following multi-time system of conservation laws in general form
\begin{equation}
\label{SCLMD}
   \begin{aligned}
   \frac{\partial u^i}{\partial t_1} + \frac{\partial
   f_{1j}^i(u)}{\partial x_j}&= 0,
   \\[5pt]
   \frac{\partial u^i}{\partial t_2} + \frac{\partial
   f_{2j}^i(u)}{\partial x_j}&= 0,
   \end{aligned}
\end{equation}
where $(t_1,t_2,x) \in (0,\infty)^2 \times \R^d$, $u(t_1,t_2,x)
\in U$ is the unknown and $f_1$, $f_2$ are given. Moreover, we
remark that the summation convention is used, that is, whenever an
index is repeated once, and only once, a summation over the range
of this index is performed.

\begin{definition}
\label{DHS} The system \eqref{SCLMD} is said to be hyperbolic,
when for any $u \in U$ and any direction $\xi \in S^{d-1}$, each
matrix
$$
  A^i_{1k}:= \frac{\partial f^i_{1j}(u)}{\partial u_k} \; \xi_j
  \quad \text{and} \quad
  A^i_{2k}:= \frac{\partial f^i_{2j}(u)}{\partial u_k} \; \xi_j
  \qquad (1 \leq i,k \leq s),
$$
has $s$ real eigenvalues $\lambda_{i1}(u,\xi) \leq
\lambda_{i2}(u,\xi) \leq \ldots \leq \lambda_{is}(u,\xi)$,
$(i=1,2)$ and is diagonalizable. Therefore, there exist $2 s$
linearly independent right and left corresponding eigenvectors
respectively $r_i(u,\xi)$, $l_i(u,\xi)$, $(i=1,2)$, and
$$
  A_i(u,\xi) \; r_i(u,\xi)= \lambda_{i} \; r_i(u,\xi)
  \quad \text{and} \quad
  l^T_i(u,\xi) \; A_i(u,\xi)= \lambda_{i} \; l_i(u,\xi).
$$
Moreover, when the eigenvalues are all distinct the system
\eqref{SCLMD} is said strictly hyperbolic.
\end{definition}

Hence, we formulate the Cauchy Problem: Find $u(t_1,t_2,x) \in U$
be a function in $(0,\infty)^2 \times \R^d$, which satisfies the
system \eqref{SCLMD} and moreover the initial data
\begin{equation}
\label{ID}
  u(0,0,x)= u_0(x) \qquad \text{for all $x \in \R^d$},
\end{equation}
where $u_0: \R^d \to U$ is a given function.

\medskip Therefore, we have established the Cauchy problem
\eqref{SCLMD}-\eqref{ID} for multi-time systems of conservation
laws in general form and so, many questions are in order at this
point. First of all, one could ask if \eqref{SCLMD}-\eqref{ID} is
well-defined, since this problem seems to be overdetermined.
In this direction, for $d$ and $s$ equals one, Lipschitz
initial-data and smooth convex flux-functions, we show in Section
\ref{MTCL1D} well-posedness to the Cauchy problem
\eqref{SCLMD}-\eqref{ID}.

\medskip
Last but not least, let us write $y= (t_1,t_2,x)$ and for $u(y)
\in \R$, we define
$$
  F(u):=
  \begin{pmatrix}
     u & 0 & f_1(u)
     \\
     0 & u & f_2(u)
  \end{pmatrix}.
$$
Then, from equation \eqref{SCLMD} we have
\begin{equation}
\label{CLSN}
  \dive_y F(u)\equiv \frac{\partial F_{ij}(u)}{\partial y_j}= 0,
  \qquad \Big(i= 1,2; \, j=1,\ldots,d+2 \Big).
\end{equation}
One could expect to apply the standard conservation laws theory.
In this way, we have the following

\begin{definition}
\label{ENTROPYSOLUTION} A field $q(u)$ is called a convex entropy
flux associated with the conservation law \eqref{CLSN}, if there
exists a continuous differentiable convex function $\eta: \R \to
\R$, such that
$$
  q_{ij}(\lambda)= \int_0^\lambda \partial_u \eta(s) \; \partial_u F_{ij}(s)
  \; ds, \qquad \text{for each $\lambda \in \R$}.
$$
Moreover, a measurable and bounded scalar function $u= u(y)$ is
called an entropy solution of the conservation law \eqref{CLSN}
associated with a initial data $u_0 \in L^\infty(\R^d)$, if the
following entropy inequality
$$
  \iint_{\R^{d+2}} q_{ij}(u) \; \partial_{y_j} \phi \; dy \geq 0
$$
holds for each convex entropy flux $q$ and all smooth test
function $\phi$ compactly supported in $(0,T)^2 \times \R^d$, for
all $T>0$, and also the initial data
\begin{equation}
\label{idcles}
  \ess \!\!\!\!\!\! \lim_{t_1,t_2 \to 0^+} \int_\R
|u(t_1,t_2,x) - u_0(x)| \, dx= 0.
\end{equation}
\end{definition}
The main issue of the paper, it will be the existence and
uniqueness result as mentioned before when $s=d=1$. For that, we
exploit the well known idea establish to study conservation laws
(at least in one spatial dimension) from the Hamilton-Jacobi
equations.

\subsection{Functional notation and some results} \label{FN}

Let $ f:  \R^d\to \R\cup \{+\infty\}$. The Legendre-Fenchel
conjugate  of $f$, that is, the function $f^*:\R^d \to \R \cup
\{+\infty\}$ is defined by the formula
$$
  f^*(x):= \sup_{y \in \R^d} \{x \cdot y - f(y) \},
$$
where $x \cdot y$ is the  scalar product of vectors $x,y \in
\R^d$. We recall that, $f^*$ is a convex function, even if $f$ is
not,  and we put $f^{**}= (f^*)^* $. If  $f$ is convex the
Fenchel-Moreau theorem  establishes an important  duality result
between $f$ and its conjugate: if $f$ is  lower semicontinuous and
convex    then $f^{**}=f$. In the following we consider proper
functions.
 If $f: \R^d \to \R$ is coercive, i.e.
$$
  \lim_{\|x\| \to \infty} \frac{f(x)}{\|x\|}= +\infty,
$$
where $\| \cdot \|$ is the Euclidean norm on $\R^d$, then $f^*$ is
also coercive.

For a Lipschitz function $f:\R^d \to \R$, we denote by $\Lip(f)$
the Lipschitz constant of $f$, that is, for each $x,y \in \R^d$,
$$
  |f(x) - f(y)| \leq \Lip(f) \; \|x - y \|.
$$

Given $f,g:\R^d \to \R$, we define (for a more general context,
see Moreau \cite{JJM})
$$
  f \ic g:\R^d \to \R
  \quad \text{and} \quad
  f \gc g:\R^d \to \R,
$$
respectively the infimal-convolution (or $\inf$-convolution ) and
gamma-convolution (or $\Gamma$-convolution) of $f,g$, by
\begin{equation}
\label{INFC}
    \big(f \ic g \big)(x)= \inf_{y \in \R^d} \{f(x-y) + g(y) \}
\end{equation}
and
\begin{equation}
\label{GMMC}
    \big(f \gc g \big)(x)= \big( f^*(x) + g^*(x) \big)^*.
\end{equation}
These operations are dual in the following sense
\begin{theorem}
\label{INFGMMC} Let $f,g:\R^d \to \R$ be two convex functions.
Then,
$$
  f \ic g = f \gc g.
$$
\end{theorem}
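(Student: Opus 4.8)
The plan is to reduce the claimed identity to the Fenchel--Moreau duality theorem recalled above, via the single algebraic fact that the Legendre--Fenchel conjugate turns infimal-convolution into addition. Concretely, I would first establish the conjugation formula $(f \ic g)^* = f^* + g^*$, and then recover $f \ic g$ by conjugating once more.

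First I would prove the conjugation formula by a direct computation. Writing out the definition of the conjugate and using $-\inf = \sup(-\,\cdot\,)$,
$$
  (f \ic g)^*(p)
  = \sup_{x}\Big\{ p\cdot x - \inf_{y}\big[ f(x-y) + g(y) \big] \Big\}
  = \sup_{x,y}\big\{ p\cdot x - f(x-y) - g(y) \big\}.
$$
After the change of variables $z = x - y$ (so that $x = z + y$), the double supremum splits, giving $\sup_z\{p\cdot z - f(z)\} + \sup_y\{p\cdot y - g(y)\} = f^*(p)+g^*(p)$. This step is purely formal and requires no hypothesis beyond finiteness. Next I would record that $f \ic g$ is convex: taking near-optimal points $y_1,y_2$ for $x_1,x_2$ and testing the infimum defining $f\ic g$ at $\theta y_1 + (1-\theta)y_2$, the convexity of $f$ and $g$ yields the convexity inequality after letting the optimization error tend to $0$. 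Conjugating the formula once more then gives
$$
  (f \ic g)^{**} = (f^* + g^*)^* = f \gc g,
$$
by the very definition \eqref{GMMC} of the $\Gamma$-convolution. Thus it remains only to show that $f \ic g$ coincides with its biconjugate, i.e. that Fenchel--Moreau applies to it.

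The one delicate point, and the step I expect to require the most care, is the lower semicontinuity needed to invoke Fenchel--Moreau. Here the hypothesis that $f$ and $g$ are finite-valued on all of $\R^d$ is decisive: since $(f\ic g)(x) \le f(x) + g(0) < +\infty$ for every $x$ (take $y=0$ in the infimum), the convex function $f \ic g$ never attains the value $+\infty$. A convex function on $\R^d$ that is everywhere $<+\infty$ is either finite everywhere, in which case it is continuous and hence lower semicontinuous, or it takes the value $-\infty$ at some point, in which case convexity forces it to be identically $-\infty$; in both cases $f \ic g$ is lower semicontinuous. In the finite case Fenchel--Moreau applies directly and gives $f \ic g = (f \ic g)^{**} = f \gc g$, while in the degenerate case $f \ic g \equiv -\infty$ one checks by inspection that $f^* + g^* \equiv +\infty$, whence $(f^*+g^*)^* \equiv -\infty$ as well and the identity persists. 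This dichotomy, forced precisely by the finiteness of $f$ and $g$, is what lets me bypass the closedness subtlety that would otherwise obstruct the direct use of biconjugation, and it completes the argument.
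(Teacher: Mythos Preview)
The paper does not supply its own proof of this theorem; it simply refers the reader to Rockafellar's \emph{Convex Analysis}, Theorem~16.4. Your argument is correct and is in fact the standard route to this identity: compute $(f \ic g)^* = f^* + g^*$ directly, then biconjugate and invoke Fenchel--Moreau. Your handling of the one genuine subtlety---lower semicontinuity of $f \ic g$---is clean and appropriate here: the hypothesis that $f,g$ are finite on all of $\R^d$ forces $f \ic g < +\infty$ everywhere, and a convex function with this property is either finite (hence continuous) or identically $-\infty$, so in both cases biconjugation returns the function. This is precisely the simplification the paper's hypotheses afford over the general closed-proper setting treated in Rockafellar, and your proof exploits it correctly.
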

The proof could be seen at Rockafellar's book \cite{RTR}, page
145, Theorem 16.4. In fact, there are also more general conditions
on $f$ and $g$, such that these operations are identical, we
address \cite{JJM}.
One recalls further that, infimal-convolution and
gamma-convolution have the properties of commutativity and
associativity.

\medskip
Finally, just for completeness of the paper, let us recall the
Moreau-Yosida approximation, which will be mentioned a posteriori.
For each $\tau>0$, the Moreau-Yosida approximation of $f:\R^d \to
\R$ is given by
$$
  f^\tau(x):=
   \inf_{y \in \R^d} \Big\{ \frac{\|x-y\|^2}{2\, \tau}
  + f(y) \Big\}.
$$

\section{Multi-time Hamilton-Jacobi equations} \label{HJE}

We begin this section by looking to some interesting features of
the multi-time Hamilton-Jacobi equations. For simplicity of
explanation, we consider only two independent times. So, we will
be focus on the following problem: Find $w: (0,\infty)^2 \times
\R^d \to \R$, satisfying
\begin{equation}
\label{HJS}
   \begin{aligned}
      w_{t_1} + H_1(D w) &= 0 \qquad \text{in $(0,\infty)^2 \times
      \R^d$}, \\[5pt]
      w_{t_2} + H_2(D w) &= 0 \qquad \text{in $(0,\infty)^2 \times
      \R^d$}, \\[5pt]
      w(0,0,x) &= g(x) \quad \text{on $\R^d$},
   \end{aligned}
\end{equation}
where $g: \R^d \to \R$ is a given initial datum and $H_i: \R^d \to
\R$ $(i=1,2)$ are given functions usually called Hamiltonians.
Here, we are mostly interested in explicit solutions for
\eqref{HJS} given by formulas with $\R^d$ domains, since they will
be exploited a posteriori in order to show solvability of
multi-time conservation laws.

\medskip
When $t_1=t_2=:t$ and hence $H_1= H_2=:H$, the system \eqref{HJS}
turns to the usual Hamilton-Jacobi equations.  In this context, we
recall some well-known facts and discuss new viewpoints. We
address, for instance, Alvarez, Barron and Ishii \cite{ABI}, Bardi
and Evans \cite{MBLCE}, also Lions and Rochet \cite{PLLJCR}, and
references there in.

\smallskip
1. If $H$ is convex and coercive, $g$ is Lipschitz, then we have
an explicit solution called the Lax formula, that is
\begin{equation}
\label{LFHJE}
  \begin{aligned}
  w_L(t,x)&= \inf_{y \in \R^d} \big\{t H^*\Big(\frac{x-y}{t}\Big)
  + g(y) \big\}
  \\
  &= \inf_{y \in \R^d} \big\{(t H)^*(x-y) + g(y) \big\}
  \\
  &= \big((t H)^* \ic g \big)(x).
  \end{aligned}
\end{equation}
Therefore, the Lax formula is given by the $\inf$-convolution
operation.

\smallskip
2. If $g$ is convex and $H$ is at least continuous, satisfying
\begin{equation}
\label{CFHF}
    \lim_{\|p\| \to \infty} \frac{t H(p) + g^*(p)}{\|p\|} = \infty
\end{equation}
uniformly with respect to any bounded $t$, then we have an
explicit solution called the Hopf formula, that is
\begin{equation}
\label{HJEHF}
  \begin{aligned}
  w_H(t,x)= \big( t H + g^* \big)^*(x),
  \end{aligned}
\end{equation}
which is clearly a convex function.

These two formulas are well-known in the literature as Hopf-Lax
formulas. In fact, there exists a standard habit to call Hopf-Lax
formula undistinguish between them, in spite they are not equal.
For instance, a necessary condition to have both formulas defined,
it is that $H$ and $g$ should be convex (assuming that we have
enough regularity). Moreover, for convex Hamiltonian the Hopf
formula could be written as
$$
  w_H(x)= \big( (t H)^* \gc g \big)(x).
$$
Hence by Theorem \ref{INFGMMC}, we see that
$$
  w_L(x)= \big( (t H)^* \ic g \big)(x)= \big( (t H)^* \gc g
  \big)(x)= w_H(x).
$$
Consequently, $H$ and $g$ be convex are a necessary and sufficient
condition to have $w_L = w_H$, besides that $H$ coercive is
equivalent to condition \eqref{CFHF}.

\bigskip
Now, we turn back our attention to the (vectorial) multi-time
Hamilton-Jacobi problem \eqref{HJS} and, hereafter we do not use
the under scripts $L$ and $H$ respectively to Lax and Hopf
formulas.
Under the assumption that $g$ is convex, continuous on $\R^d$ and
$H_i$ $(i=1,2)$ are continuous and satisfy \eqref{CFHF},
the Proposition 4 at Lions-Rochet's paper \cite{PLLJCR}, presents
an explicit Hopf formula, that is to say
$$
   w(t_1,t_2,x)= \big( t_1 H_1 + t_2 H_2 + g^* \big)^*(x),
$$
which solves \eqref{HJS} $a.e.$ in $[0,T]^2 \times \R^d$, for
$T>0$. Although, they do not present in that paper an explicit Lax
formula. Indeed, considering that $H_i$ $(i=1,2)$ are convex, $g$
is bounded and uniformly continuous, further $Dg$ is measurable
and bounded or $H_i$ $(i=1,2)$ are coercive, they show on
Proposition 5 the following
$$
  w(t_1,t_2,x)= S_{H_1}(t_1) \, S_{H_2}(t_2) g(x)= S_{H_2}(t_2) \, S_{H_1}(t_1) g(x),
$$
which solves \eqref{HJS} $a.e.$ and is Lipschitz on $\R^d \times
[\,\varepsilon,T]^2$ for all $\varepsilon >0$.

\smallskip
On the other hand, following our discussion above, we propose here
to study the following (called) Lax formula, that is
\begin{equation}
\label{LFMHJE}
\begin{aligned}
  w(t_1,t_2,x)&= \big( (t_1 H_1 + t_2 H_2)^* \ic g \big)(x)
\\
  &= \inf_{y \in \R^d} \big\{(t_1 H_1 + t_2 H_2)^*(x-y) + g(y)
  \big\},
\end{aligned}
\end{equation}
where for our purposes, we assume that $g$ is Lipschitz in $\R^d$.

\begin{remark}
\label{RM} Some remarks are in order just now:

1. The regularity of $g$, i.e. Lipschitz continuous, is a natural
assumption in order to show solvability of the multi-time system
of conservation laws. In fact, this condition could be relaxed
using the Moreau-Yosida approximation $g^\tau$ of $g$ and then,
applying the same strategy used in Alvarez, Barron and Ishii
\cite{ABI}.

\smallskip
2. The Lax formula \eqref{LFMHJE} already appears, as well, in
Imbert and Vollet's paper, see \cite{CIMV} to study the vectorial
Hamilton-Jacobi equations. Although, completed different from that
paper, here we are interested to show existence, uniqueness of
\eqref{HJS} and, further Lipschitz regularity of \eqref{LFMHJE} in
an explicit and computationally way, which it will be exploited in
the multi-time conservation laws section.

\smallskip
3. If we agree with the notation $w(t,x)= \big(S_H(t) \; g
\big)(x)$ for \eqref{LFHJE}, then we observe that
$$
  \begin{aligned}
  w(t_1,t_2,x)&= \big( (t_1 H_1 + t_2 H_2 )^* \ic g \big)(x)
  \\
  &= \big( (t_1 H_1)^* \gc (t_2 H_2)^* \ic g \big)(x)
\\
  &= \big( (t_1 H_1)^* \ic (t_2 H_2)^* \ic g \big)(x),
  \end{aligned}
$$
which justifies the notation and commutativity in Proposition 5 at
Lions and Rochet's paper \cite{PLLJCR}.

\smallskip
4. For simplicity, we sometimes denote $
  t_1 H_1 + t_2 H_2 =:\bold{t} \cdot \bold{H}
$
(as obvious notation) and, the Lax formula \eqref{LFMHJE} becomes
$$
  w(t_1,t_2,x)= \big( (\bold{t} \cdot \bold{H})^* \ic g \big)(x).
$$

\smallskip
5. Finally, we give respectively the hypotheses $(H1)-(H3)$ on
Barles and Tourin \cite{BT} and the Assumption A  on Plaskacz and
Quincampoix \cite{SPMQ}:

$(H1)$ For any $R>0$, there exists a constant $K_R>0$, such that
$$
  \begin{aligned}
  |H_i(x,p)| &\leq K_R \qquad \qquad \qquad \text{in $\R^d
  \times \{|p| \leq R \}, i=1,2$},
  \\
  |D_pH_i(x,p)| &\leq K_R \; (1+|x|) \qquad \text{a.e. in $\R^d
  \times \{|p| \leq R \}, i=1,2$}.
  \end{aligned}
$$

$(H2)$ $H_1, H_2$ are coercive uniformly with respect to $x \in
\R^d$.

\smallskip
$(H3)$ $H_1, H_2$ are $C^1$ in $\R^d \times \R^d$ and satisfy
$$
  D_x H_1(x,p) \; D_p H_2(x,p) - D_x H_2(x,p) \; D_p H_1(x,p)= 0,
$$
for each $x,p \in \R^d$. The equality above is always satisfied if
$H_1, H_2$ do not depend on $x$, further the Hamiltonians could be
assumed locally Lipschitz.

\medskip
Assumption $A$; $H(u,p)= \tilde{H}(u,p)+\lambda(u)$, where
$\lambda(u)$ is a $C^1$ real scalar non-negative and
non-increasing function, and $\tilde{H}: \R \times \R^d \to \R$,
satisfy
$$
  \begin{aligned}
  \tilde{H}(u,\cdot) \quad &\text{is a concave and positively
  homogeneous},
  \\
  \tilde{H}(\cdot,p) \quad &\text{is a non-increasing $C^1$
  function}.
  \end{aligned}
$$
\end{remark}

\subsection{Existence}

First, we show that the infimum in \eqref{LFMHJE} is in fact a
minimum, hence the infimal convolution is said exact. Moreover,
$w$ is a continuous function.

\begin{lemma}
\label{MSHJ} Assume that $g:\R^d \to \R$ is a Lipschitz continuous
function, and let $w$ be defined by \eqref{LFMHJE}. Then,
$$
 w(t_1,t_2,x)= \min_{y \in \R^d} \big\{ (\bold{t} \cdot
 \bold{H})^*(x-y) + g(y) \big\}.
$$
Moreover, $w$ is a continuous function.
\end{lemma}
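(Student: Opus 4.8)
The plan is to prove the two assertions separately: first that the infimum defining $w$ is attained (exactness of the infimal convolution), and then that $w$ is continuous on $(0,\infty)^2\times\R^d$. Both rest on the same structural facts about the conjugate $(t_1 H_1 + t_2 H_2)^*$, which I would establish at the outset. Since each $H_i$ is convex and coercive and $t_1,t_2>0$, the function $t_1 H_1 + t_2 H_2$ is convex and coercive; hence its Legendre--Fenchel conjugate is finite everywhere (so real-valued, and being convex, continuous on all of $\R^d$), and by the coercivity-transfer result recalled in Section~\ref{FN} it is itself coercive.

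For the exactness, I fix $(t_1,t_2,x)$ and set $\Phi(y):=(t_1 H_1 + t_2 H_2)^*(x-y)+g(y)$. This $\Phi$ is continuous, and writing $z=x-y$ and using the Lipschitz bound $g(x-z)\ge g(x)-\Lip(g)\,\|z\|$ together with the coercivity of the conjugate gives $\Phi(y)\ge (t_1 H_1 + t_2 H_2)^*(z)-\Lip(g)\,\|z\|+g(x)\to+\infty$ as $\|y\|\to\infty$. Thus $\Phi$ is continuous and coercive on $\R^d$, so any minimizing sequence is bounded and a convergent subsequence produces a point at which the infimum is attained. This yields the minimum formula.

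For continuity, I would first record that $w(t_1,t_2,\cdot)$ is Lipschitz with constant $\Lip(g)$ uniformly in $(t_1,t_2)$, by the standard estimate that shifting the argument of $g$ by $x_1-x_2$ changes the infimand by at most $\Lip(g)\,\|x_1-x_2\|$. For the dependence on the time variables, and indeed joint continuity, I write $\Psi(t_1,t_2,z):=(t_1 H_1 + t_2 H_2)^*(z)=\sup_p\{z\cdot p - t_1 H_1(p)-t_2 H_2(p)\}$, a supremum of maps jointly affine in $(t_1,t_2,z)$, hence jointly convex; being finite on the open set $(0,\infty)^2\times\R^d$, it is jointly continuous there. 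Upper semicontinuity of $w$ is then immediate, since $w$ is an infimum over $y$ of the jointly continuous maps $(t_1,t_2,x)\mapsto \Psi(t_1,t_2,x-y)+g(y)$.

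The remaining, and hardest, point is lower semicontinuity, which demands control of the minimizers as $(t_1,t_2)$ vary. The key estimate is a uniform coercivity: on a compact set $K=[\delta,R]^2\subset(0,\infty)^2$, testing the supremum at $p=\rho\, z/\|z\|$ gives $\Psi(t_1,t_2,z)\ge \rho\,\|z\|-R\,C(\rho)$ for every $\rho>0$, where $C(\rho):=\max_{\|p\|\le\rho}\big(|H_1(p)|+|H_2(p)|\big)$; letting $\rho\to\infty$ shows $\Psi(t_1,t_2,z)/\|z\|\to+\infty$ uniformly over $K$. Given $(t_1^n,t_2^n,x^n)\to(t_1,t_2,x)$, the value $w(t_1^n,t_2^n,x^n)$ is bounded above (test with $y=x^n$), and the uniform coercivity forces the minimizers $y^n$ to remain bounded; extracting $y^{n_k}\to y^*$ and using the joint continuity of $\Psi$ gives $\liminf_n w(t_1^n,t_2^n,x^n)\ge \Psi(t_1,t_2,x-y^*)+g(y^*)\ge w(t_1,t_2,x)$, which is lower semicontinuity. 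Together with upper semicontinuity this yields joint continuity. I expect the uniform-coercivity estimate to be the delicate step: continuity in $x$ alone is immediate, but the variation in the time variables enters nonlinearly through the conjugate, and it is precisely this uniform control that keeps the minimizers compact.
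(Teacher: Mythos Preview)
Your argument is correct. For the exactness of the infimal convolution, your approach coincides with the paper's: both bound the infimand from below using coercivity of $(\bold t\cdot\bold H)^*$ together with the Lipschitz bound $g(y)\ge g(0)-\Lip(g)\|y\|$, forcing any minimizing sequence to be bounded.

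For continuity, the two approaches differ. The paper's proof of this lemma actually says nothing about continuity; the claim is established only in the subsequent Lemma~\ref{DSHJ}, where $w$ is shown to be Lipschitz in all variables via the shift argument in $x$ (your same estimate) and then the semigroup property in the time variables. Your route instead proves joint continuity directly, via upper semicontinuity (infimum of continuous functions) and lower semicontinuity (uniform coercivity of $\Psi$ on compact time-boxes, hence compactness of minimizers). Your argument is self-contained and does not rely on the semigroup identity, which is a genuine advantage if one wants continuity before that identity is available; on the other hand, the paper's approach yields the stronger Lipschitz regularity in one stroke, which is what is actually needed downstream (Rademacher, a.e.\ differentiability). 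Your uniform-coercivity estimate $\Psi(t_1,t_2,z)\ge \rho\|z\|-R\,C(\rho)$ is correct and is indeed the step that makes the lower-semicontinuity work; note, though, that once you have Lipschitz continuity in $x$ uniformly in $(t_1,t_2)$, you could also shortcut the time-continuity by the elementary bound $|w(t_1,t_2,x)-g(x)|\le \max_{\|\xi\|\le\Lip(g)}|t_1H_1(\xi)+t_2H_2(\xi)|$ (as the paper does in \eqref{L3LT2}) and the semigroup property, avoiding the semicontinuity machinery.
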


\begin{proof}
By definition of infimum, there exists $\{y_n\}$ on $\R^d$ such
that
$$
  w(t_1,t_2,x)= \lim_{n \to \infty} \big\{ (\bold{t} \cdot
 \bold{H})^*(x-y_n) + g(y_n) \big\}.
$$
If $\{y_n\}$ has at least one convergent subsequence, we are done.
Otherwise, $\{y_n\}$ should be unbounded, which is not the case.
Indeed, recall that $H^{*}_{i}$ ($i=1,2$) are coercive, hence
$(\bold{t} \cdot \bold{H})^*$ is also coercive. Therefore, there
exist $\lambda$ a non-negative real arbitrary number and a
constant $\beta$, such that, for $n$ sufficiently large
$$
  (\bold{t} \cdot \bold{H})^*(x - y_n) \geq \lambda \; \|x-y_{n}\|
  - \beta - 1/n.
$$
Moreover, since the function $g$ is Lipschitz continuous, we have
$$
  g(y_{n})\geq -\Lip(g)\left\|y_{n}\right\| + g(0).
$$
Then, it follows by the above inequalities that
$$
  \begin{aligned}
  (\bold{t} \cdot
 \bold{H})^*(x-y_n) + g(y_n) &\geq \lambda \, \|x - y_n\| - \Lip(g) \, \|y_n\|
  + g(0) - \beta -1/n
  \\[5pt]
  &\geq \lambda \, \big( \|y_n\| - \|x\| \big) - \Lip(g) \, \|y_n\|
  + g(0) - \beta -1/n
  \\[5pt]
  &\geq C \, \|y_n\| + g(0) - \beta -1/n,
  \end{aligned}
$$
where $C$ is a positive constant (take $\lambda > \Lip (g)$).
Then, passing to the limit as $n \to \infty$, we have a
contradiction, since the infimum in \eqref{LFMHJE} is finite.
\end{proof}

\bigskip
The next lemma establish the semigroup property of the Lax
formula.

\begin{lemma}
\label{LSG} Let $g:\R^d \to \R$ be a Lipschitz continuous function
and $w$ defined by \eqref{LFMHJE}. Then, for each $0 \leq s_i <
t_i$, $(i=1,2)$, and all $x \in \R^d$, it follows that
\begin{equation}
\label{SHJ1}
 w(t_1,t_2,x)= \min_{y \in \R^d}
 \big\{ \big( (\bold{t-s}) \cdot \bold{H} \big)^{*}(x-y)
 + w(s_1,s_2,y) \big\}.
\end{equation}
\end{lemma}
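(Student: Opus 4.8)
The plan is to deduce \eqref{SHJ1} not by a direct minimization over $y$, but from the associativity of the infimal convolution together with a single conjugation identity. First recall that, by the Lax formula \eqref{LFMHJE} and Lemma \ref{MSHJ},
$$
  w(t_1,t_2,\cdot)= (\mathbf{t}\cdot\mathbf{H})^* \ic g
  \qquad\text{and}\qquad
  w(s_1,s_2,\cdot)= (\mathbf{s}\cdot\mathbf{H})^* \ic g,
$$
where $\mathbf{s}\cdot\mathbf{H}= s_1 H_1 + s_2 H_2$ follows the notation of Remark \ref{RM}.4 and each infimal convolution is in fact an exact minimum. Thus it suffices to show that convolving $w(s_1,s_2,\cdot)$ with $\big((\mathbf{t-s})\cdot\mathbf{H}\big)^*$ reproduces $w(t_1,t_2,\cdot)$.

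The key step is the conjugation identity
$$
  (\mathbf{t}\cdot\mathbf{H})^*= \big((\mathbf{t-s})\cdot\mathbf{H}\big)^* \ic (\mathbf{s}\cdot\mathbf{H})^*.
$$
To obtain it I would write $\mathbf{t}\cdot\mathbf{H}= (\mathbf{t-s})\cdot\mathbf{H} + \mathbf{s}\cdot\mathbf{H}$ and note that, since the $H_i$ are convex and lower semicontinuous and the scalars $t_i-s_i$ and $s_i$ are nonnegative, both summands are convex and lower semicontinuous. Hence the Fenchel-Moreau theorem gives $\big((\mathbf{t-s})\cdot\mathbf{H}\big)^{**}= (\mathbf{t-s})\cdot\mathbf{H}$ and $(\mathbf{s}\cdot\mathbf{H})^{**}= \mathbf{s}\cdot\mathbf{H}$, so that by the definition \eqref{GMMC} of the $\Gamma$-convolution,
$$
  \big((\mathbf{t-s})\cdot\mathbf{H}\big)^* \gc (\mathbf{s}\cdot\mathbf{H})^*
  = \Big( (\mathbf{t-s})\cdot\mathbf{H} + \mathbf{s}\cdot\mathbf{H} \Big)^*
  = (\mathbf{t}\cdot\mathbf{H})^*.
$$
Since the two functions $\big((\mathbf{t-s})\cdot\mathbf{H}\big)^*$ and $(\mathbf{s}\cdot\mathbf{H})^*$ are convex (being conjugates), Theorem \ref{INFGMMC} lets me replace $\gc$ by $\ic$, yielding the displayed identity.

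With this in hand, the commutativity and associativity of the infimal convolution give
$$
  w(t_1,t_2,\cdot)= (\mathbf{t}\cdot\mathbf{H})^* \ic g
  = \Big( \big((\mathbf{t-s})\cdot\mathbf{H}\big)^* \ic (\mathbf{s}\cdot\mathbf{H})^* \Big) \ic g
  = \big((\mathbf{t-s})\cdot\mathbf{H}\big)^* \ic \Big( (\mathbf{s}\cdot\mathbf{H})^* \ic g \Big)
  = \big((\mathbf{t-s})\cdot\mathbf{H}\big)^* \ic w(s_1,s_2,\cdot),
$$
which is exactly \eqref{SHJ1} once the outer convolution is shown to be a minimum. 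For the latter I would argue as in Lemma \ref{MSHJ}: the function $w(s_1,s_2,\cdot)$ is $\Lip(g)$-Lipschitz (being an infimal convolution of the Lipschitz datum $g$ with another function), while $\big((\mathbf{t-s})\cdot\mathbf{H}\big)^*$ is coercive because $t_i-s_i>0$ and, as in Lemma \ref{MSHJ}, the conjugates $H_i^*$ are coercive; the same coercivity-versus-Lipschitz competition used there forces any minimizing sequence to remain bounded, so the infimum is attained.

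I expect the main obstacle to be the rigorous justification of the conjugation identity, and in particular the appeal to Fenchel-Moreau: one must verify that the standing hypotheses on the Hamiltonians (convexity, coercivity, lower semicontinuity) are preserved under nonnegative scaling and addition, so that $(\mathbf{t-s})\cdot\mathbf{H}$ and $\mathbf{s}\cdot\mathbf{H}$ genuinely fall within the scope of Theorem \ref{INFGMMC}. A secondary, more technical point is the exactness of the final convolution, i.e.\ confirming that $w(s_1,s_2,\cdot)$ really inherits the Lipschitz bound needed to rerun the argument of Lemma \ref{MSHJ} verbatim.
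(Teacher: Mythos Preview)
Your proof is correct and follows essentially the same route as the paper: split $\mathbf{t}\cdot\mathbf{H}=(\mathbf{t-s})\cdot\mathbf{H}+\mathbf{s}\cdot\mathbf{H}$, use the definition of the $\Gamma$-convolution together with Fenchel--Moreau to write $(\mathbf{t}\cdot\mathbf{H})^*=\big((\mathbf{t-s})\cdot\mathbf{H}\big)^*\gc(\mathbf{s}\cdot\mathbf{H})^*$, invoke Theorem~\ref{INFGMMC} to replace $\gc$ by $\ic$, and conclude by associativity of the infimal convolution. The paper's proof is simply a terser version of yours; your additional remarks on Fenchel--Moreau and on exactness of the outer minimum (via the argument of Lemma~\ref{MSHJ}) are correct and make explicit what the paper leaves implicit.
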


\begin{proof}
The proof is a simple application of the $\inf$-convolution and
$\Gamma$-convolution operations. Indeed, we have
$$
  \begin{aligned}
  w(t_1,t_2,x)&= \big( (t_1 H_1 + t_2 H_2 )^* \ic g \big)(x)
\\
  &= \big( ((t_1-s_1) H_1 + s_1 H_1 + (t_2-s_2) H_2 + s_2 H_2 )^* \ic g \big)(x)
\\
  &= \Big(\big((t_1-s_1) H_1 + (t_2-s_2) H_2 \big)^*
  \gc \big(s_1 H_1 + s_2 H_2 \big)^* \ic g \Big)(x)
\\
  &= \Big(\big((t_1-s_1) H_1 + (t_2-s_2) H_2 \big)^*
  \ic \big(s_1 H_1 + s_2 H_2 \big)^* \ic g \Big)(x),
  \end{aligned}
$$
where we have used Theorem \ref{INFGMMC}.
\end{proof}

Now, we prove that $w$ defined by \eqref{LFMHJE} is a Lipschitz
continuous function. Therefore, by Rademacher's Theorem, see
\cite{EG}, differentiable almost everywhere in $\R^d$ and for
almost all $t_1,t_2 >0$.

\begin{lemma}
\label{DSHJ} The function $w$ defined by \eqref{LFMHJE} is
Lipschitz in $[0,\infty)^2 \times \R^d$. Moreover, we have
\begin{equation}
\label{IDMTHJE}
  \lim_{t_{1},t_{2} \to 0} w(t_{1},t_{2},x)= g(x) \quad \text{on $\R^d$}.
\end{equation}
\end{lemma}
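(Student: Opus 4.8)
The plan is to establish the joint Lipschitz bound by decomposing it into two independent one-variable estimates — Lipschitz continuity in the spatial variable $x$ (uniformly in $t_1,t_2$) and Lipschitz continuity in each time variable (uniformly in the remaining variables) — and then to recombine them through the triangle inequality. The initial condition \eqref{IDMTHJE} will follow at once from the time estimate together with the direct evaluation of \eqref{LFMHJE} at $t_1=t_2=0$.

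For the spatial estimate I would fix $t_1,t_2$ and use that $w(t_1,t_2,\cdot)$ is the inf-convolution of $(t_1H_1+t_2H_2)^*$ with the Lipschitz datum $g$. Writing $w(t_1,t_2,x)=\inf_z\{(t_1H_1+t_2H_2)^*(z)+g(x-z)\}$ and invoking $g(x-z)\le g(x'-z)+\Lip(g)\,\|x-x'\|$ gives $w(t_1,t_2,x)\le w(t_1,t_2,x')+\Lip(g)\,\|x-x'\|$; exchanging the roles of $x$ and $x'$ yields $|w(t_1,t_2,x)-w(t_1,t_2,x')|\le \Lip(g)\,\|x-x'\|$, with a constant independent of the times. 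Thus every slice $w(t_1,t_2,\cdot)$ is Lipschitz with constant $\Lip(g)$, which in particular lets me reuse the minimizer-existence argument of Lemma \ref{MSHJ} for the slices.

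The time estimate is the main obstacle. Fixing $t_2$ and $x$, I would apply the semigroup identity of Lemma \ref{LSG} (its inf-convolution computation stays valid when only one time increases) to write, for $0\le t_1< t_1'$ and $h=t_1'-t_1$, $w(t_1',t_2,x)=\min_y\{(hH_1)^*(x-y)+w(t_1,t_2,y)\}$ with $(hH_1)^*(z)=hH_1^*(z/h)$. Choosing $y=x$ gives the upper bound $w(t_1',t_2,x)-w(t_1,t_2,x)\le hH_1^*(0)=-h\inf_p H_1(p)$. For the lower bound I take a minimizer $y^*$, apply the spatial Lipschitz bound $w(t_1,t_2,y^*)\ge w(t_1,t_2,x)-\Lip(g)\,\|x-y^*\|$, and put $\zeta=(x-y^*)/h$, obtaining $w(t_1',t_2,x)-w(t_1,t_2,x)\ge h\big(H_1^*(\zeta)-\Lip(g)\,\|\zeta\|\big)\ge -hC_1$, where $C_1:=-\inf_{\zeta}\{H_1^*(\zeta)-\Lip(g)\,\|\zeta\|\}$. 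The crucial point — and the step most in need of care — is that $C_1$ is finite: since $H_1$ is coercive, $H_1^*$ is coercive, so $H_1^*(\zeta)-\Lip(g)\,\|\zeta\|\to+\infty$ as $\|\zeta\|\to\infty$ and the infimum is attained. Hence $|w(t_1',t_2,x)-w(t_1,t_2,x)|\le L_1|t_1'-t_1|$ for a finite constant $L_1$ independent of $t_2$ and $x$, and the symmetric argument produces the analogous constant $L_2$ for the variable $t_2$.

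Finally I would chain the three one-variable estimates by the triangle inequality through the intermediate points $(t_1',t_2,x)$ and $(t_1',t_2',x)$, concluding that $w$ is Lipschitz on $[0,\infty)^2\times\R^d$ with constant $\max\{\Lip(g),L_1,L_2\}$. For \eqref{IDMTHJE}, evaluating \eqref{LFMHJE} at $t_1=t_2=0$ shows $(0\cdot H_1+0\cdot H_2)^*(x-y)=0$ when $y=x$ and $+\infty$ otherwise, so $w(0,0,x)=g(x)$; the time estimate then yields $|w(t_1,t_2,x)-g(x)|\le L_1t_1+L_2t_2\to 0$ as $t_1,t_2\to 0$.
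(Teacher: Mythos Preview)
Your proof is correct and follows essentially the same strategy as the paper: Lipschitz continuity in $x$ via the inf-convolution structure, then Lipschitz continuity in time via the semigroup identity combined with the spatial bound and coercivity of the Legendre transforms. The only organizational difference is that you increment one time variable at a time (which cleanly handles increments of mixed sign), whereas the paper moves $(t_1,t_2)\to(s_1,s_2)$ jointly and derives the initial condition \eqref{IDMTHJE} directly from the two-sided estimate \eqref{L3LT2} rather than from evaluating \eqref{LFMHJE} at the origin; the underlying computations are the same.
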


\begin{proof}
1. First, fix $t_1,t_2 > 0$ and $x,x_0 \in\R^{d}$. Choose
$y\in\R^{d}$, such that
$$
  w(t_1,t_2,x)=  (t_1 H_1 + t_2 H_2)^*(x-y) + g(y).
$$
Thus we have
$$
  \begin{aligned}
  w(t_{1},t_{2},x_0)-w(t_{1},t_{2},x)&= \min_{z \in \R^d}
  \big\{ (t_1 H_1 + t_2 H_2 )^*(x-z) + g(z) \big\}
  \\[5pt]
  &-(t_1 H_1 + t_2 H_2)^*(x-y) - g(y)
  \\[5pt]
  &\leq g(x_0-x+y) - g(y) \leq \Lip(g) \|x_0 - x\|,
  \end{aligned}
$$
where we have used $z=x_{0}-x+y$. Now, reverting $x_0$ and $x$ in
the above, we obtain
\begin{equation}
\label{LCG}
   | w(t_1,t_2,x) - w(t_1,t_2,x_0) | \leq \Lip(g) \, \|x - x_0 \|,
\end{equation}
that is, $w(t_1,t_2,x)$ is Lipschitz with respect to the spacial
variable $x \in \R^d$.

\medskip
2. Since $g$ is Lipschitz continuous, for each $x,y \in \R^d$, we
have
$$
  g(y) \geq g(x) - \Lip(g) \, \|x - y \|.
$$
Therefore, by definition of $w(t_1,t_2,x)$, we obtain
\begin{equation}
\label{L3LT}
\begin{aligned}
   g(x)-w(t_1,t_2,x) &\leq
    \max_{y \in \R^d} \big\{
    \Lip(g) \, \|x-y\| - \big(t_1 H_1 + t_2 H_2 \big)^*(x-y) \big\}
    \\
    &\leq
    \max_{z \in \R^d} \big\{
    \max_{\xi \in B_{\Lip(g)}(0)} z \cdot \xi - \big(t_1 H_1 + t_2 H_2 \big)^*(z) \big\}
    \\
    &= \max_{\xi \in B_{\Lip(g)}(0)}
     \big(t_1 H_1 + t_2 H_2 \big)(\xi).
\end{aligned}
\end{equation}
On the other hand, taking $x=y$ in the definition of
$w(t_1,t_2,x)$, it follows that
$$
  w(t_1,t_2,x) - g(x) \leq \big(t_1 H_1 + t_2 H_2 \big)^*(0).
$$
Consequently, we obtain
\begin{equation}
\label{L3LT2} \inf_{\xi \in \R^d} \big(\bold{t} \cdot \bold{H}
\big)(\xi)
  \leq g(x) - w(t_1,t_2,x) \leq
  \max_{\xi \in B_{\Lip(g)}(0)}
  \big( \bold{t} \cdot \bold{H} \big)(\xi).
\end{equation}
Furthermore, passing to the limit as $t_1,t_2 \to 0$, we obtain
\eqref{IDMTHJE}.

\medskip
3. Finally, we show that $w$ is Lipschitz continuous with respect
to the time variables. Fix $0 < s_i <t_i$, $(i=1,2)$ and $x \in
\R^d$. By \eqref{LCG} for each $t_1$, $t_2$, we have
$$
  \Lip(w(t_1,t_2,\cdot)) \leq \Lip(g).
$$
Then, we apply the semigroup property of the Lax formula given by
Lemma \ref{LSG} and, moreover proceed similarly as we have done in
step 2 above. Hence the result follows.
\end{proof}

\bigskip
To end up this section, let us show that \eqref{LFMHJE} solves the
multi-time Hamilton-Jacobi partial differential equation in
\eqref{HJS}, wherever $w$ is differentiable. One recalls that, the
initial-data is shown by Lemma \ref{DSHJ}.

\begin{lemma}
\label{TEHJE} Let $(t_1,t_2,x) \in (0,\infty)^2 \times \R^d$ be a
differentiable point for the multitime Lax formula given by
\eqref{LFMHJE}. Then,
$$
   \begin{aligned}
      \partial_{t_1} w(t_1,t_2,x) + H_1\big(D w(t_1,t_2,x)\big) &= 0,\\[5pt]
      \partial_{t_2} w(t_1,t_2,x) + H_2\big(D w(t_1,t_2,x)\big) &= 0.
   \end{aligned}
$$
\end{lemma}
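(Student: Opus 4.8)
The plan is to prove the two identities separately; I will describe the argument for the $t_1$-equation, the $t_2$-equation being identical after interchanging the roles of $H_1,H_2$ and $t_1,t_2$. The key preliminary observation is that, for $t_2$ held fixed, the multitime formula reduces to an ordinary (one-time) Lax formula: by the decomposition recorded in Remark \ref{RM} together with the associativity of the $\inf$-convolution,
$$w(t_1,t_2,x)=\big((t_1H_1)^*\ic\big[(t_2H_2)^*\ic g\big]\big)(x)=\inf_{y\in\R^d}\Big\{t_1H_1^*\Big(\tfrac{x-y}{t_1}\Big)+\td g(y)\Big\},$$
where $\td g(\cdot):=w(0,t_2,\cdot)=\big((t_2H_2)^*\ic g\big)(\cdot)$ is Lipschitz with $\Lip(\td g)\le\Lip(g)$ by \eqref{LCG}. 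Writing $v(t_1,x):=w(t_1,t_2,x)$, so that $\partial_{t_1}v=\partial_{t_1}w$ and $Dv=Dw$, it then suffices to run the classical Hopf--Lax two-sided argument for the single Hamiltonian $H_1$ and datum $\td g$.

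For the inequality $\partial_{t_1}w+H_1(Dw)\le0$ I would use the semigroup property of Lemma \ref{LSG} with $s_1=t_1-h$, $s_2=t_2$ ($h>0$ small), for which the increment Hamiltonian is $hH_1$, giving $w(t_1,t_2,x)=\min_y\{(hH_1)^*(x-y)+w(t_1-h,t_2,y)\}$. Testing with $y=x-hz$ for an arbitrary fixed $z\in\R^d$ and using $(hH_1)^*(hz)=hH_1^*(z)$ yields $w(t_1,t_2,x)\le hH_1^*(z)+w(t_1-h,t_2,x-hz)$; dividing by $h$ and letting $h\to0^+$, differentiability at $(t_1,t_2,x)$ turns the one-sided quotient into $\partial_{t_1}w+Dw\cdot z$, so $\partial_{t_1}w+Dw\cdot z\le H_1^*(z)$ for every $z$. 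Taking the supremum over $z$ and invoking $H_1^{**}=H_1$ (valid since $H_1$ is convex, coercive, hence lower semicontinuous) gives the claim.

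For the reverse inequality $\partial_{t_1}w+H_1(Dw)\ge0$ I would exploit exactness of the infimum (Lemma \ref{MSHJ}) in the reduced formula: pick $y^*$ with $v(t_1,x)=t_1H_1^*(\frac{x-y^*}{t_1})+\td g(y^*)$, set $w_0:=\frac{x-y^*}{t_1}$, and for $0<h<t_1$ choose the comparison point $z:=y^*+\frac{t_1-h}{t_1}(x-y^*)$ on the segment from $y^*$ to $x$, so that $\frac{z-y^*}{t_1-h}=w_0$ and $x-z=hw_0$. Using $y^*$ as a competitor in the formula for $v(t_1-h,z)$ gives $v(t_1-h,z)\le(t_1-h)H_1^*(w_0)+\td g(y^*)$, whence $v(t_1,x)-v(t_1-h,x-hw_0)\ge hH_1^*(w_0)$; dividing by $h$ and passing to the limit produces $\partial_{t_1}w+Dw\cdot w_0\ge H_1^*(w_0)$, and therefore $\partial_{t_1}w\ge H_1^*(w_0)-Dw\cdot w_0\ge-\sup_{p}\{Dw\cdot p-H_1^*(p)\}=-H_1(Dw)$.

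The main obstacle is this reverse inequality: it requires both the attainment of the infimum, so that a genuine minimizer $y^*$ exists to build the comparison point, and the construction of $z$ on the segment keeping the velocity $w_0$ fixed (the feature that makes $z$ admissible at time $t_1-h$ while reproducing the value along the optimal ray). Differentiability at the point is then exactly what legitimizes replacing the one-sided difference quotients along the line $(t_1-h,x-hw_0)$ by the genuine derivatives $\partial_{t_1}w$ and $Dw$; the $\le$ direction, by contrast, is unconditional since it only tests arbitrary competitors.
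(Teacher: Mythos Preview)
Your argument is correct. For the $\le$ direction it coincides with the paper's: both invoke the semigroup identity of Lemma~\ref{LSG} with $s_2=t_2$, test with a competitor of the form $x\pm hz$, pass to the limit in the difference quotient, and close via $H_1^{**}=H_1$.

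For the $\ge$ direction the two proofs diverge in a way that matters. The paper keeps the full two-time conjugate, takes the minimizer $z$ of $(t_1H_1+t_2H_2)^*\ic g$, sets $y=\tfrac{s_1}{t_1}x+\tfrac{\delta}{t_1}z$, and asserts
\[
(t_1H_1+t_2H_2)^*(x-z)-(s_1H_1+t_2H_2)^*(y-z)\ \ge\ \delta\,H_1^*\!\Big(\tfrac{x-z}{t_1}\Big).
\]
You instead absorb the frozen $t_2$-part into a Lipschitz datum $\td g=(t_2H_2)^*\ic g$, reducing to the genuine one-time Lax formula $v(t_1,\cdot)=(t_1H_1)^*\ic\td g$, and run the classical straight-line comparison with the minimizer $y^*$ of that reduced problem and velocity $w_0=(x-y^*)/t_1$. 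This reduction buys real robustness: the paper's displayed inequality is in fact false when $t_2>0$ (for $H_1=H_2=\tfrac12|\cdot|^2$, $t_1=2$, $s_1=t_2=1$, $x-z=1$ one gets $\tfrac{5}{48}<\tfrac{1}{8}$), because the scaling $y-z=\tfrac{s_1}{t_1}(x-z)$ ignores the split between the $H_1$- and $H_2$-pieces inside the inf-convolution $(t_1H_1)^*\ic(t_2H_2)^*$. Your reduction isolates the $H_1$-piece, so the one-variable estimate $v(t_1,x)-v(t_1-h,x-hw_0)\ge hH_1^*(w_0)$ holds exactly and the argument closes. In effect your proof is the natural repair of the paper's step: the correct comparison velocity is $(x-y^*)/t_1$ with $y^*$ the intermediate minimizer in the decomposition $(t_1H_1)^*\ic\td g$, not $(x-z)/t_1$ with $z$ the minimizer over the original datum $g$.
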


\begin{proof} Let us show the first differential equality, the second is similar.
First, by the semigroup property, we have
\begin{equation}
\label{TEHJ1}
    w(t_1,t_2,x) \leq \big( (t_1 - s_1) H_1 \big)^*(x-y)
    + w(s_1,t_2,y),
\end{equation}
where we have used $0 < s_2= t_2$, $0 < s_1 < t_1$ and $y \in
\R^d$. Take $\delta>0$, $q \in \R^d$ fixed, and replace in
\eqref{TEHJ1}
$
   s_1 \mapsto t_1, \quad t_1 \mapsto t_1 + \delta, \quad
   y \mapsto x \quad \text{and} \quad x \mapsto x+ \delta \,q,
$
thus we have
$$
  w(t_1+\delta, t_2, x+\delta q) - w(t_1, t_2, x)
  \leq \delta \, H_1^*(q).
$$
Then, dividing by $\delta$ and letting to $0^+$, we obtain
$$
   w_{t_1}(t_1,t_2,x) + q \cdot Dw(t_1,t_2,x) -
   H_1^*(q) \leq 0.
$$
Consequently, by the above inequality, it follows that
$$
  \begin{aligned}
   w_{t_1}(t_1,t_2,x) &+ \max_{p \in \R^d} \{p \cdot Dw(t_1,t_2,x) -
   H_1^*\big(p\big)\} \leq 0,
  \end{aligned}
$$
which implies
$$
    \begin{aligned}
   w_{t_1}(t_1,t_2,x) &+ H_1\big(Dw(t_1,t_2,x)\big) \leq 0.
  \end{aligned}
$$

\medskip
Now choose $z \in \R^d$ such that
$$
  w(t_1,t_2,x)= \big( t_1 H_1 + t_2 H_2 \big)^*(x-z) + g(z).
$$
Fix $\delta>0$ and conveniently set $t_1= s_1 + \delta$,
$$
  y= \frac{t_{1}-\delta}{t_{1}} \, x
  + \frac{\delta}{t_{1}} \, z,
  \quad \text{so} \quad \frac{x-z}{t_{1}}= \frac{y-z}{s_1}.
$$
Therefore, by definition of $w(s_1,t_2,y)$, we obtain
$$
  \begin{aligned}
  w(t_{1},t_{2},x)- w(s_1,t_2,y) &\geq
  \big( t_1 H_1 + t_2 H_2 \big)^*(x-z)
  - \big( s_1 H_1 + t_2 H_2 \big)^*(y-z)
\\
  &\geq \delta \; H_1^*\Big(\frac{x-z}{t_1}\Big).
  \end{aligned}
$$
Then, passing to the limit as $\delta \to 0^+$ after divide by
$\delta$, we obtain
\begin{equation}
\label{TEHJ5}
    w_{t_1}(t_1,t_2,x) + \frac{x-z}{t_1} \cdot Dw(t_1,t_2,x) -
    H_1^*\Big(\frac{x-z}{t_1}\Big) \geq 0.
\end{equation}
Finally, we have by \eqref{TEHJ5}
$$
  \begin{aligned}
  w_{t_1}(t_1,t_2,x) &+ H_1\big(Dw(t_1,t_2,x)\big)
  = w_{t_1}(t_1,t_2,x)
  \\[5pt]
  &+ \max_{q \in \R^d} \big\{ q \cdot Dw(t_1,t_2,x) -
  H_1^*(q) \big\}
  \\[5pt]
  &\geq w_{t_1}(t_1,t_2,x)
  \\[5pt]
  &+ \frac{x-z}{t_1} \cdot Dw(t_1,t_2,x) -
    H_1^*\Big(\frac{x-z}{t_1}\Big) \geq 0.
  \end{aligned}
$$
\end{proof}

Consequently, we have proved in this section the following
\begin{theorem}
\label{GHLFET} Let $w$ be the Lax formula given by \eqref{LFMHJE}.
Then, $w$ is Lipschitz continuous, is differentiable a.e. in
$(0,\infty)^2 \times \R^d$, and solves the multi-time
Hamilton-Jacobi initial-value problem
$$
  \begin{aligned}
      w_{t_1} + H_1(D w) &= 0 \qquad \text{a.e. in $(0,\infty)^2 \times
      \R^d$}, \\[5pt]
      w_{t_2} + H_2(D w) &= 0 \qquad \text{a.e. in $(0,\infty)^2 \times
      \R^d$}, \\[5pt]
      w(0,0,x) &= g(x) \quad \text{on $\R^d$}.
   \end{aligned}
$$
\end{theorem}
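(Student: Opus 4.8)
The plan is to assemble the results already established in this section, since every property asserted in the statement has been isolated in one of the preceding lemmas; the theorem is essentially a synthesis, and the proof should make the logical gluing explicit rather than recompute anything.

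First I would record the regularity. Lemma \ref{DSHJ} shows directly that $w$ defined by \eqref{LFMHJE} is Lipschitz continuous on $[0,\infty)^2 \times \R^d$ (continuity alone being already guaranteed by Lemma \ref{MSHJ}). Next I would pass from Lipschitz regularity to almost-everywhere differentiability: since $w$ is Lipschitz on a subset of $\R^{d+2}$, Rademacher's theorem (see \cite{EG}) guarantees that $w$ is differentiable at every point of $(0,\infty)^2 \times \R^d$ outside a set of $(d+2)$-dimensional Lebesgue measure zero. Hence the set of differentiable points is of full measure in the open domain.

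Then I would verify the two equations a.e. At every point where $w$ is differentiable, Lemma \ref{TEHJE} yields both
$$
  \partial_{t_1} w + H_1(Dw) = 0 \quad \text{and} \quad \partial_{t_2} w + H_2(Dw) = 0 .
$$
Because these differentiable points exhaust $(0,\infty)^2 \times \R^d$ up to a null set, both identities hold a.e., which is precisely the first two lines of the system. Finally, the initial condition $w(0,0,x)=g(x)$ on $\R^d$ is exactly \eqref{IDMTHJE}, obtained in Lemma \ref{DSHJ} by passing to the limit $t_1,t_2 \to 0$ in the two-sided bound \eqref{L3LT2}.

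In this synthesis there is no genuinely hard step remaining, the substance of the argument already living in Lemmas \ref{DSHJ} and \ref{TEHJE}. The only point demanding a little care is the gluing in the middle: one must ensure that the hypothesis of Lemma \ref{TEHJE}, namely that $(t_1,t_2,x)$ be a differentiable point, is met on a full-measure subset of the open domain. This is exactly what Rademacher's theorem supplies once the global Lipschitz bound of Lemma \ref{DSHJ} is in hand, so I would present Rademacher's theorem as the hinge connecting the regularity lemma to the pointwise verification of the partial differential equations.
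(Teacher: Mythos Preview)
Your proposal is correct and matches the paper's approach exactly: the paper introduces Theorem \ref{GHLFET} with the phrase ``Consequently, we have proved in this section the following'' and gives no separate proof, treating it as the synthesis of Lemmas \ref{MSHJ}, \ref{DSHJ}, and \ref{TEHJE} together with Rademacher's theorem, just as you do. Your explicit identification of Rademacher's theorem as the hinge between Lipschitz regularity and the pointwise verification in Lemma \ref{TEHJE} is precisely the logical gluing the paper leaves implicit.
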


\begin{definition}
\label{RLFVS} A continuous function $w:(0,\infty)^2 \times \R^d
\to \R$ is called:

\begin {itemize}
\item A viscosity subsolution of the initial-value problem
\eqref{HJS}, provided $$w(0,0,\cdot)= g(\cdot) \quad \text{on
$\R^d$}$$ and for each $\phi\in C^1((0,\infty)^2 \times \R^d)$ if
$w-\phi$ has a local maximum in $(\tau_1,\tau_2,\xi) \in
(0,\infty)^2 \times \R^d$, then
\begin{equation*}
\begin{aligned}
   \phi_{t_1}(\tau_1,\tau_2,\xi) + H_1\big(D \phi(\tau_1,\tau_2,\xi)\big) &\leq 0, \\[5pt]
      \phi_{t_2}(\tau_1,\tau_2,\xi) + H_2\big(D\phi(\tau_1,\tau_2,\xi)\big) &\leq  0.
   \end{aligned}
\end{equation*}
\medskip

\item A viscosity supersolution of the initial-value problem
\eqref{HJS}, provided $$w(0,0,\cdot)= g(\cdot) \quad \text{on
$\R^d$}$$ and for each $\phi\in C^1((0,\infty)^2 \times \R^d)$ if
$w-\phi$ has a local minimum in
$(\tau_1,\tau_2,\xi)\in(0,\infty)^2 \times \R^d$, then
\begin{equation*}
\begin{aligned}
   \phi_{t_1}(\tau_1,\tau_2,\xi) + H_1\big(D \phi(\tau_1,\tau_2,\xi)\big) &\geq 0, \\[5pt]
      \phi_{t_2}(\tau_1,\tau_2,\xi) + H_2\big(D\phi(\tau_1,\tau_2,\xi)\big) &\geq  0.
   \end{aligned}
\end{equation*}
\medskip
\end{itemize}
Moreover, $w$ is said a viscosity solution of \eqref{HJS}, when it
is both a viscosity supersolution and a viscosity subsolution of
\eqref{HJS}.
\end{definition}

One observes that, with a similar strategy used before, it is not
difficult to show that $w$ given by \eqref{LFMHJE} is a viscosity
subsolution and also a viscosity supersolution of \eqref{HJS}.
Then, by definition it is a viscosity solution of \eqref{HJS}.

\subsection{Uniqueness}

In this section using the idea of doubling variables, see for
instance Kruzkov \cite{KRUZKOV}, Crandall, Evans and Lions
\cite{CEL}, we show the uniqueness of bounded Lipschitz solutions
for the initial-value problem \eqref{HJS}.

\begin{theorem} \label{TUHJS} Assume that the initial-data $g$ is a
bounded Lipschitz function, $H_i$ $(i=1,2)$ are convex and
coercive. Then, there exists at most one Lipschitz, bounded
viscosity solution of \eqref{HJS}.
\end{theorem}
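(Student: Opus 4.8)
The plan is to establish a comparison principle and then apply it twice. Precisely, if $u$ is a bounded Lipschitz viscosity subsolution and $v$ a bounded Lipschitz viscosity supersolution of \eqref{HJS} sharing the corner datum $g$, I would show $u\le v$ on $[0,\infty)^2\times\R^d$; since every viscosity solution is simultaneously a sub- and a supersolution, interchanging the roles of two solutions yields $u=v$. Throughout I would exploit the a priori bound $\Lip(w(t_1,t_2,\cdot))\le\Lip(g)$ obtained in the course of Lemma \ref{DSHJ}, so that only gradients $p$ in the fixed ball $B_{\Lip(g)}(0)$ are relevant; there $H_1,H_2$ are bounded, and by convexity and coercivity they are controlled uniformly.

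First I would convert $u$ into a strict subsolution by setting $u_\lambda:=u-\lambda(t_1+t_2)$ for a small fixed $\lambda>0$. Feeding $\phi+\lambda(t_1+t_2)$ into Definition \ref{RLFVS} shows that the subsolution inequalities become the strict ones $\phi_{t_i}+H_i(D\phi)\le-\lambda$, $(i=1,2)$, at every local maximum of $u_\lambda-\phi$. Because $u,v$ are bounded, it then suffices to derive a contradiction from the assumption $\sup(u_\lambda-v)>0$.

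The core is the doubling of variables, and since there are two time scales I would double both of them. I would maximize over $(t_1,t_2,x,s_1,s_2,y)\in([0,T]^2\times\R^d)^2$ the quantity
\[
\Phi := u_\lambda(t_1,t_2,x) - v(s_1,s_2,y) - \frac{\|x-y\|^2 + (t_1-s_1)^2 + (t_2-s_2)^2}{2\varepsilon} - \gamma\big(\|x\|^2+\|y\|^2\big),
\]
where the last term (with $\gamma\to0$) localizes the spatial maximization, which boundedness alone does not make compact. At a maximizer the penalties force $\hat t_i-\hat s_i=O(\varepsilon)$ and $\hat x-\hat y=O(\sqrt\varepsilon)$, and the quadratic terms act as admissible $C^1$ test functions. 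Writing $p:=(\hat x-\hat y)/\varepsilon$, in the interior case $\hat t_i,\hat s_i>0$ the strict subsolution inequality for $u_\lambda$ and the supersolution inequality for $v$ give, for $i=1,2$,
\[
\frac{\hat t_i-\hat s_i}{\varepsilon}+H_i(p)\le-\lambda \qquad\text{and}\qquad \frac{\hat t_i-\hat s_i}{\varepsilon}+H_i(p)\ge 0,
\]
up to a discrepancy of order $\gamma$ in the Hamiltonian arguments coming from the localizer; since both solutions see the \emph{same} spatial penalty, the time and Hamiltonian contributions cancel, and letting $\gamma\to0$ leaves $-\lambda\ge0$, a contradiction.

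The main obstacle, and the genuinely new feature of the multi-time setting, is the boundary analysis: the Cauchy datum is prescribed only at the corner $\{t_1=t_2=0\}$, while the two lateral faces $\{t_1=0,\,t_2>0\}$ and $\{t_2=0,\,t_1>0\}$ carry no data, so I cannot directly bound $\Phi$ when a maximizer concentrates on a face. I would resolve this by a dimensional reduction: since $w$ is Lipschitz up to the boundary and $w(t_1,\cdot,\cdot)\to w(0,\cdot,\cdot)$ uniformly as $t_1\to0^+$, stability of viscosity solutions under uniform limits shows that $w(0,\cdot,\cdot)$ is itself a viscosity solution of the single-time problem $\partial_{t_2}\,\cdot\,+H_2(D\,\cdot\,)=0$ with datum $g$ on the full slice $\{t_2=0\}$; symmetrically on $\{t_2=0\}$. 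The single-time uniqueness needed here follows from exactly the same doubling argument, now free of the lateral-face difficulty because its datum occupies an entire hyperplane. This forces $u=v$ on both faces, reducing \eqref{HJS} to a bona fide Cauchy problem with data on the whole time-boundary; the interior doubling above then closes the comparison. I expect this corner-versus-face dichotomy, together with the stability step that propagates the corner datum along each face, to be the delicate part, the interior Hamiltonian cancellation being the classical Crandall–Lions computation.
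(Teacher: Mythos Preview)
Your interior doubling argument is essentially the paper's: both introduce a strict-subsolution perturbation (you subtract $\lambda(t_1+t_2)$ from $u$; the paper adds $\tfrac{\lambda_i}{2}(t_i+s_i)$ to the penalty), double both time variables and space, attach a quadratic spatial localizer, and subtract the two viscosity inequalities to reach a contradiction via the local Lipschitz continuity of the $H_i$. One minor correction: you invoke the bound $\Lip(w(t_1,t_2,\cdot))\le\Lip(g)$ from Lemma~\ref{DSHJ}, but that lemma is proved only for the Lax formula \eqref{LFMHJE}, not for an arbitrary Lipschitz viscosity solution. This is inessential, since any Lipschitz solution already has bounded spatial gradients, which is all you need to confine the Hamiltonian arguments to a fixed compact set.

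Where you genuinely diverge from the paper is the treatment of the time boundary. The paper handles it in a single stroke: using only the corner datum $w(0,0,\cdot)=\tilde w(0,0,\cdot)=g$, it telescopes $w(\hat t_1,\hat t_2,\hat x)-\tilde w(\hat s_1,\hat s_2,\hat y)$ through the corner and bounds each piece by a modulus of continuity, obtaining $\tfrac{\alpha}{4}\le \omega(\hat t_1)+\omega(\hat t_2)+\tilde\omega(\hat t_1)+\tilde\omega(\hat t_2)$; this forces the doubled maximizer to satisfy $\hat t_i,\hat s_i\ge\mu>0$ for small $\epsilon$, so the viscosity inequalities are always legitimately invoked. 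Your route is modular instead: you freeze $t_1$, argue that each slice $w(t_1,\cdot,\cdot)$ is a single-time viscosity solution of $\partial_{t_2}+H_2(D\cdot)=0$, pass to $t_1\to0^+$ by stability under uniform limits, and then quote single-time uniqueness on the face $\{t_1=0\}$ (and symmetrically on $\{t_2=0\}$) to upgrade the corner datum to full lateral boundary data before running the interior comparison. This is conceptually clean and recycles the classical one-time theory, and it would generalize to $n$ times by induction; the price is two auxiliary ingredients (the freezing/penalization argument showing slices inherit the viscosity property, and the stability lemma) that the paper's direct modulus-of-continuity estimate sidesteps entirely.
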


\begin{proof}
1. Let $\alpha$ be a positive real number, defined as
\begin{equation}
\label{df} \alpha:=\sup_{[0,+\infty)^{2}\times\R^{d}}
\big(w-\tilde{w} \big),
\end{equation}
where $w$ and $\tilde{w}$ are two Lipschitz, bounded solutions of
\eqref{HJS} with the same initial-data. Now, we choose
$0<\epsilon,\lambda_{1},\lambda_{2}<1$ and define the function
$\Theta$ as
$$
  \begin{aligned}
  \Theta(t_{1},t_{2},s_{1},s_{2},x,y)&:= w(t_{1},t_{2},x)- \tilde{w}(s_{1},s_{2},y)
\\
  &-\rho_{\epsilon,\lambda_{1},\lambda_{2}}(t_{1},t_{2},s_{1},s_{2},x,y)
  \end{aligned}
$$
for each $t_i,s_i \geq 0$  $(i=1,2)$ and $x,y \in \R^d$, where
$$
  \begin{aligned}
  \rho_{\epsilon,\lambda_{1},\lambda_{2}}(t_{1},t_{2},s_{1},s_{2},x,y)&:= \frac{\lambda_{1}}{2}(t_{1}+s_{1}) + \frac{\lambda_{2}}{2}(t_{2}+s_{2})
\\
  &+\epsilon^{-2} \Big((t_{1}-s_{1})^{2}
  +(t_{2}-s_{2})^{2} + \|x-y\|^{2} \Big)
\\
  &+\epsilon \, (\|x\|^{2}+\|y\|^{2})\\
  \end{aligned}
$$
So, as
$$\lim_{\left\|(t_{1},t_{2},s_{1},s_{2},x,y)\right\|\rightarrow+\infty}\rho_{\epsilon,\lambda_{1},\lambda_{2}}(t_{1},t_{2},s_{1},s_{2},x,y)=+\infty,$$
we have
$$\lim_{\left\|(t_{1},t_{2},s_{1},s_{2},x,y)\right\|\rightarrow+\infty}\Theta(t_{1},t_{2},s_{1},s_{2},x,y)=-\infty$$
and, as the function $\Theta$ is continuous in its domain, and it
is proper (not indentically $\pm\infty$), it there must be a point
of maximum, i.e., there exists a point
$(\hat{t}_{1},\hat{t}_{2},\hat{s}_{1},\hat{s}_{2},\hat{x},\hat{y})
\in [0,+\infty)^{4} \times \R^{2d}$, such that
\begin{equation}
\label{mx}
\Theta(\hat{t}_{1},\hat{t}_{2},\hat{s}_{1},\hat{s}_{2},\hat{x},\hat{y})=
\max_{[0,+\infty)^{4} \times \R^d}
\Theta(t_{1},t_{2},s_{1},s_{2},x,y).
\end{equation}
\newline
2. From ($\ref{mx}$), the map

$$(t_{1},t_{2},x)\longmapsto\Theta(t_{1},t_{2},\hat{s_{1}},\hat{s_{2}},x,\hat{y})$$
has a maximum in ($\hat{t}_{1},\hat{t}_{2},\hat{x}$). If we write
$\Theta$ as
$$
  \Theta(t_{1},t_{2},\hat{s_{1}},\hat{s_{2}},x,\hat{y})=w(t_{1},t_{2},x)-v(t_{1},t_{2},x),
$$
where
\begin{equation*}
\begin{aligned}
  v(t_{1},t_{2},x):=
  \tilde{w}(\hat{s}_{1},\hat{s}_{2},\hat{y})
  +\rho_{\epsilon,\lambda_{1},\lambda_{2}}(t_{1},t_{2},\hat{s}_{1},\hat{s}_{2},x,\hat{y})
\end{aligned}
\end{equation*}
then, $(w-v)$ has a maximum in
($\hat{t}_{1},\hat{t}_{2},\hat{x}$). Since $w$ is a viscosity
solution of \eqref{HJS}, it follows that
\begin{equation*}
\begin{aligned}
   v_{t_1}(\hat{t}_1,\hat{t}_2,\hat{x}) + H_1\big(D v(\hat{t}_{1},\hat{t}_{2},\hat{x})\big) &\leq 0, \\[5pt]
      v_{t_2}(\hat{t}_{1},\hat{t}_{2},\hat{x}) + H_2\big(D v(\hat{t}_{1},\hat{t}_{2},\hat{x})\big) &\leq  0.
   \end{aligned}
\end{equation*}
Now, using the definition of $v$ we obtain

\begin{equation}
\label{Eqv}
\begin{aligned}
\frac{\lambda_{1}}{2}+\epsilon^{-2}(\hat{t}_{1}-\hat{s}_{1})+H_{1}\left(\frac{2}{\epsilon^{2}}(\hat{x}-\hat{y})+2\epsilon\hat{x}\right)\leq
0,
\\
\frac{\lambda_{2}}{2}+\epsilon^{-2}(\hat{t}_{2}-\hat{s}_{2})+H_{2}\left(\frac{2}{\epsilon^{2}}(\hat{x}-\hat{y})+2\epsilon\hat{x}\right)\leq
0.
 \end{aligned}
\end{equation}
Analogously, the map
$$(s_{1},s_{2},y)\longmapsto -\Theta(\hat{t}_{1},\hat{t}_{2},s_{1},s_{2},\hat{x},y)$$
has a minimum in ($\hat{s}_{1},\hat{s}_{2},\hat{y}$). Writing
$-\Theta(\hat{t}_{1},\hat{t}_{2},s_{1},s_{2},\hat{x},y)$ as
$$-\Theta(\hat{t}_{1},\hat{t}_{2},s_{1},s_{2},\hat{x},y):=\tilde{w}(s_{1},s_{2},y)-\tilde{v}(s_{1},s_{2},y)$$
hence ($\tilde{w}-\tilde{v}$) has a minimum in
($\hat{s}_{1},\hat{s}_{2},y$), where
$$
  \begin{aligned}
  \tilde{v}(s_{1},s_{2},y):=w(\hat{t}_{1},\hat{t}_{2},\hat{x})-\rho_{\epsilon,\lambda_{1},\lambda_{2}}(\hat{t}_{1},\hat{t}_{2},s_{1},s_{2},\hat{x},y).
  \end{aligned}
$$
Similarly to ($\ref{Eqv}$), we have

\begin{equation}
\label{Eqvv}
\begin{aligned}
-\frac{\lambda_{1}}{2}+\epsilon^{-2}(\hat{t}_{1}-\hat{s}_{1})+H_{1}\left(2\epsilon^{-2}(\hat{x}-\hat{y})
  -2\epsilon\hat{y} \right)\geq 0,
\\
-\frac{\lambda_{2}}{2}+\epsilon^{-2}(\hat{t}_{2}-\hat{s}_{2})+H_{2}\left(2\epsilon^{-2}(\hat{x}-\hat{y})-2\epsilon\hat{y}\right)\geq
0.
\end{aligned}
\end{equation}

3. Finally, making the difference between ($\ref{Eqvv}$) and
($\ref{Eqv}$) with respect to the first line, we have
$$
  \lambda_{1}\leq H_{1}\left(2\epsilon^{-2}(\hat{x}-\hat{y})
  -2\epsilon\hat{y}\right)
  -H_{1}\left(2\epsilon^{-2}(\hat{x}-\hat{y})+2\epsilon\hat{x}\right).
$$
Since $H_{1}$ is locally Lipschitz continuous (and the maximum
point
$(\hat{t_{1}},\hat{t_{2}},\hat{s_{1}},\hat{s_{2}},\hat{x},\hat{y})$
is attained in a compact ball), we have
\begin{equation}
\label{dif} \lambda_{1}\leq
2\epsilon\left\|\hat{y}+\hat{x}\right\|.
\end{equation}
At this point, we need an estimate of
$\left\|\hat{y}+\hat{x}\right\|$ to conclude that $\lambda_1= 0$,
since $\epsilon>0$ is arbitrary. It will be obtained thanks to the
definition of $\rho_{\epsilon,\lambda_{1},\lambda_{2}}$. In fact,
we can fix $0< \epsilon, \lambda_1, \lambda_2 < 1$ so small that
($\ref{df}$) implies
\begin{equation}
\label{alp}
 \Theta(\hat{t}_{1},\hat{t}_{2},\hat{s}_{1},
 \hat{s}_{2},\hat{x},\hat{y})
 \geq
 \sup_{[0,T]^2 \times \R^{2d}} \Theta(t_{1},t_{2},t_{1},t_{2},x,x)
 \geq \frac{\alpha}{2}.
\end{equation}
Moreover, since

$$\Theta(\hat{t}_{1},\hat{t}_{2},\hat{s}_{1},\hat{s}_{2},\hat{x},\hat{y})\geq \Theta(0,0,0,0,0,0),$$
it follows that
$$
  \begin{aligned}
  \rho_{\epsilon,\lambda_{1},\lambda_{2}}(\hat{t}_{1},\hat{t}_{2},\hat{s}_{1},\hat{s}_{2},\hat{x},\hat{y})
  &\leq [w(\hat{t}_{1},\hat{t}_{2},\hat{x}) - w(0,0,0)]
\\
  &-[\tilde{w}(\hat{s}_{1},\hat{s}_{2},\hat{y})-\tilde{w}(0,0,0)].
  \end{aligned}
$$
Since $w$ and $\tilde{w}$ are bounded, we obtain as
$\epsilon\rightarrow 0^{+}$
\begin{equation}
\begin{aligned}
\label{x1}
  |\hat{t}_{1}-\hat{s}_{1}|,|\hat{t}_{2}-\hat{s}_{2}|, \|\hat{x}-\hat{y}\|=
  O(\epsilon),
  \\
  \epsilon \; (\|\hat{x}\|^{2}+\|\hat{y}\|^{2})=O(1).
\end{aligned}
\end{equation}
The last equation of \eqref{x1} implies that
\begin{equation}
\label{x2}
  \begin{aligned}
  \epsilon \, (\|\hat{x}\|+\|\hat{y}\|)&= \epsilon^{\frac{1}{4}}\; \epsilon^{\frac{3}{4}}
  \, (\|\hat{x}\| + \|\hat{y}\|)
  \\
  &\leq\epsilon^{\frac{1}{2}}+ C \, \epsilon^{\frac{3}{2}}(\|\hat{x}\|^{2}+\|\hat{y}\|^{2})
  \leq C \, \epsilon^{\frac{1}{2}}
   \end{aligned}
\end{equation}
for some positive constant $C$. To complete the proof, we use
($\ref{x2}$) in($ \ref{dif}$) and get
$$
  \lambda_{1}\leq 2C\epsilon^{\frac{1}{2}}.
$$
Similarly, we obtain that $\lambda_{2}= 0$, and this contradiction
completes the proof.
\end{proof}

\begin{remark}
Note that in the proof, the points $\hat{t}_{1},\hat{t}_{2},\hat{s}_{1},\hat{s}_{2}$ could be zero, and in that
case, with respect to the time, the function $\Theta$ would be constant.
To see that this does not happen, we recall that
$$\Theta(\hat{t}_{1},\hat{t}_{2},\hat{t}_{1},\hat{t}_{2},\hat{x},\hat{x})
\leq
\Theta(\hat{t}_{1},\hat{t}_{2},\hat{s}_{1},\hat{s}_{2},\hat{x},\hat{y})$$
and from this, we get
$$
\begin{aligned}
  w(\hat{t}_{1},\hat{t}_{2},\hat{x}) &- \tilde{w}(\hat{s}_{1},\hat{s}_{2},\hat{y})-\rho_{\epsilon,\lambda_{1},\lambda_{2}}(\hat{t}_{1},\hat{t}_{2},\hat{s}_{1},\hat{s}_{2},\hat{x},\hat{y})
\\
  &\geq w(\hat{t}_{1},\hat{t}_{2},\hat{x})-\tilde{w}(\hat{t}_{1},\hat{t}_{2},\hat{x})
  -\rho_{\epsilon,\lambda_{1},\lambda_{2}}(\hat{t}_{1},\hat{t}_{2},\hat{t}_{1},\hat{t}_{2},\hat{x},\hat{x}).
\end{aligned}
$$
Therefore, we obtain
$$
  \begin{aligned}
  \epsilon^{-2}((\hat{t}_{1}-\hat{s}_{1})^{2}&+(\hat{t}_{2}-\hat{s}_{2})^{2}+\left\|\hat{x}-\hat{y}\right\|^{2})
  \leq
   w(\hat{x},\hat{t}_{1},\hat{t}_{2})-\tilde{w}(\hat{y},\hat{s}_{1},\hat{s}_{2})
\\
   &-\frac{\lambda_{1}}{2}(\hat{t}_{1}-\hat{s}_{1})+\frac{\lambda_{2}}{2}(\hat{t}_{2}-\hat{s}_{2})+\epsilon(\hat{x}-\hat{y})(\hat{x}+\hat{y}).
   \end{aligned}
$$
Then, by \eqref{x1}, \eqref{x2} and the Lipschitz continuity of
$\tilde{w}$, we have
\begin{equation}
\label{x3}
|\hat{t}_{1}-\hat{s}_{1}|,|\hat{t}_{2}-\hat{s}_{2}|,\left\|\hat{x}-\hat{y}\right\|=o(\epsilon).
\end{equation}
Now, let $\omega$ be the modulus of continuity of $w$; that is,
$$|w(t_{1},t_{2},x)-\tilde{w}(s_{1},s_{2},y)|\leq \omega(|t_{1}-s_{1}|+|t_{2}-s_{2}|+\left\|x-y\right\|)$$
for all $x,y\in\R^{n}$, $0\leq t,~s\leq T$, and
$\omega(r)\rightarrow 0$ as $r\rightarrow 0$. Similarly,
$\tilde{\omega}(\cdot)$ will denote the modulus of continuity of
$\tilde{w}$. Then ($\ref{alp}$) implies

\begin{equation*}
\begin{aligned}
   \frac{\alpha}{2}\leq w(\hat{t}_{1},\hat{t}_{2},\hat{x})-\tilde{w}(\hat{s}_{1},\hat{s}_{2},\hat{y})&=[w(\hat{t}_{1},\hat{t}_{2},\hat{x})-w(\hat{t}_{1},0,\hat{x})]+[w(\hat{t}_{1},0,\hat{x})-w(0,0,\hat{x})]
   \\
   &+[w(0,0,\hat{x})-\tilde{w}(0,0,\hat{x})]+[\tilde{w}(0,0,\hat{x})-\tilde{w}(\hat{t}_{1},0,\hat{x})]
   \\
   &+[\tilde{w}(\hat{t}_{1},0,\hat{x})-\tilde{w}(\hat{t}_{1},\hat{t}_{2},\hat{x})]+[\tilde{w}(\hat{t}_{1},\hat{t}_{2},\hat{x})-\tilde{w}(\hat{s}_{1},\hat{s}_{2},\hat{y})].
\end{aligned}
\end{equation*}
Therefore, using ($\ref{x1}$), ($\ref{x3}$) and the initial
condition, we have

$$\frac{\alpha}{2}\leq \omega(\hat{t}_{2})+\omega(\hat{t}_{1})+\tilde{\omega}(\hat{t}_{1})+\tilde{\omega}(\hat{t_{2}})+\tilde{\omega}(o(\epsilon)).$$
As $\epsilon$ is a positive arbitrary number, we can take it so
small as necessary to obtain

$$\frac{\alpha}{4}\leq \omega(\hat{t}_{2})+\omega(\hat{t}_{1})+\tilde{\omega}(\hat{t}_{1})+\tilde{\omega}(\hat{t_{2}})$$
and this implies for some constant $\mu>0$,
$$
  \hat{t}_{1},\hat{t}_{2}\geq\mu>0.
$$
Analogously, we have $\hat{s}_{1},\hat{s}_{2}\geq\mu>0$.
\end{remark}
%

\section{Multi-time conservation laws} \label{MTCL1D}

Once we have establish existence and uniqueness for multi-time
Hamilton-Jacobi system, we are going to use it in this section, in
order to show solvability of the multi-time system of conservation
laws. Therefore, we fixe $d,s$ equals one and for given $H_i$
(i=1,2) two smooth (uniformly) convex flux-function, we consider
the following Cauchy problem: Find $u: (0,\infty)^2 \times \R \to
\R$, satisfying
\begin{equation}
\label{MTSCL1D}
   \begin{aligned}
      u_{t_1} + \partial_x H_1(u) &= 0 \qquad \text{in $(0,\infty)^2 \times
      \R$}, \\[5pt]
      u_{t_2} + \partial_x H_2(u) &= 0 \qquad \text{in $(0,\infty)^2 \times
      \R$}, \\[5pt]
      u(0,0,x) &= u_0(x) \quad \text{on $\R$},
   \end{aligned}
\end{equation}
where $u_0 \in L^\infty(\R)$ is a given initial-data. With no loss
of generality, we assume $H_i(0)= 0$ $(i=1,2)$. Following a usual
strategy to $1D$ scalar conservation laws, we define
\begin{equation}
\label{GU0}
  g(x):= \int_0^x u_0(y) \, dy \qquad (x \in \R),
\end{equation}
thus $g$ is a Lipschitz function with $\Lip(g)= \|u_0\|_\infty$,
and recall the multi-time Lax formula given by \eqref{LFMHJE}.
Thus by Theorem \ref{GHLFET}, $w$ solves the multi-time
Hamilton-Jacobi system \eqref{HJS} and, if we assume that $w$ is
smooth, then we can differentiate that system with respect to $x$,
to deduce
\begin{equation}
\label{DHJS}
   \begin{aligned}
      w_{x \, t_1} + \partial_x H_1(w_x) &= 0 \qquad \text{in $(0,\infty)^2 \times
      \R$}, \\[5pt]
      w_{x \, t_2} + \partial_x H_2(w_x) &= 0 \qquad \text{in $(0,\infty)^2 \times
      \R$}, \\[5pt]
      w_x(0,0,x) &= u_0(x) \quad \text{on $\R$}.
   \end{aligned}
\end{equation}
Now, setting $u= w_x$ we obtain that $u$ solves the system
\eqref{MTSCL1D}. Certainly, the computation is only formal,
indeed, even that the function $w$ is differentiable a.e., we are
not allowed to differentiate $H_1(w_x)$ with respect to $x$,
similarly to $H_2$. Although,
\begin{equation}
\begin{aligned}
\label{SCL} u(t_1,t_2,x):&= \partial_x \big( \min_{y \in \R}
 \big\{ \big( t_1 H_1 + t_2 H_2\big)^*(x-y) + g(y) \big\} \big)
 \\
 &= \partial_x \big( (\bold{t} \cdot \bold{H})^* \ic g \big)(x),
\end{aligned}
\end{equation}
seems to be the best candidate for a solution of the Cauchy
problem \eqref{MTSCL1D}. In fact, we will show that such function
$u$ as defined above is a (weak integral) solution, but before,
let's us first show a more useful formula.

\begin{lemma}
\label{MTLOF} $(${\bf Multi-time Lax-Oleinik formula}$)$. Assume
$H_i:\R \to \R$ $(i=1,2)$ are smooth uniformly convex, $u_0 \in
L^\infty(\R)$ and $g$ is given by \eqref{GU0}. Then, for each
$t_1, t_2
> 0$, there exists for all but at most countably many values $x
\in \R$, such that \eqref{SCL} has the following form
\begin{equation}
\label{SCLLO} u(t_1,t_2,x)= \Big((t_1 H_1)^* \ic (t_2 H_2)^*
\Big)'\big(x-y(t_1,t_2,x)\big),
\end{equation}
where the mapping $x \mapsto y(t_1,t_2,x)$ is nondecreasing.
Moreover, for each $z>0$
\begin{equation}
\label{SCLLOF} u(t_1,t_2,x+z) - u(t_1,t_2,x) \leq
\Lip\big(\big((t_1 H_1 + t_2 H_2)^*\big)'\big) \; z.
\end{equation}
\end{lemma}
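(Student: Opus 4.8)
The plan is to reduce the two-Hamiltonian problem to a single convex kernel, run an envelope argument to obtain the representation \eqref{SCLLO}, and then read off both the monotonicity of the minimizer and the one-sided bound \eqref{SCLLOF} from the strict convexity of that kernel. First I would collapse the two fluxes into one convex function. By Theorem \ref{INFGMMC} (and item 3 of Remark \ref{RM}), for fixed $t_1,t_2>0$ one has
$$
  \big(t_1 H_1 + t_2 H_2\big)^* = (t_1 H_1)^* \ic (t_2 H_2)^* =: \Phi ,
$$
so that $w(t_1,t_2,x)=(\Phi \ic g)(x)=\min_{y\in\R}\{\Phi(x-y)+g(y)\}$, the minimum being attained by Lemma \ref{MSHJ}. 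Since each $H_i$ is smooth and uniformly convex, $t_1H_1+t_2H_2$ is smooth and uniformly convex for $t_1,t_2>0$; hence its Legendre transform $\Phi$ is $C^1$, strictly convex, with $\Phi'$ strictly increasing and globally Lipschitz, and $\Lip(\Phi')<\infty$ (essentially the reciprocal of the convexity modulus of $t_1H_1+t_2H_2$). This $\Phi$ is the object appearing in \eqref{SCLLO}--\eqref{SCLLOF}.

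Next I would establish the pointwise formula by an envelope argument. Fix a point $x$ at which $w(t_1,t_2,\cdot)$ is differentiable and let $y=y(t_1,t_2,x)$ be any minimizer. For every $x'$ the same $y$ is competitive, so $w(t_1,t_2,x')-w(t_1,t_2,x)\le \Phi(x'-y)-\Phi(x-y)$; dividing by $x'-x$ and letting $x'\to x$ from the right and from the left yields $\partial_x w(t_1,t_2,x)=\Phi'(x-y)$, which is \eqref{SCLLO} with $u=\partial_x w$ as in \eqref{SCL}. Because $\Phi'$ is strictly increasing, this identity simultaneously forces the minimizer $y$ to be \emph{unique} at every such $x$. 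Moreover, comparing $w$ at $x\pm h$ against a minimizer at $x$ gives $w(t_1,t_2,x+h)+w(t_1,t_2,x-h)-2w(t_1,t_2,x)\le \Lip(\Phi')\,h^2$, so $w(t_1,t_2,\cdot)$ is semiconcave; a semiconcave function of one variable is differentiable off an at most countable set, which is precisely the exceptional set in the statement.

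It remains to prove that $x\mapsto y(t_1,t_2,x)$ is nondecreasing and to derive \eqref{SCLLOF}. For $x_1<x_2$ with minimizers $y_1,y_2$, adding the two optimality inequalities cancels the $g$-terms and gives
$$
  \Phi(x_1-y_1)+\Phi(x_2-y_2)\le \Phi(x_1-y_2)+\Phi(x_2-y_1).
$$
If $y_1>y_2$, then $x_1-y_2$ and $x_2-y_1$ both lie strictly between $x_1-y_1$ and $x_2-y_2$ while having the same sum, so strict convexity of $\Phi$ makes the right-hand side strictly smaller than the left, a contradiction; hence $y_1\le y_2$. For \eqref{SCLLOF}, with $z>0$ write $u(t_1,t_2,x+z)-u(t_1,t_2,x)=\Phi'\big(x+z-y(t_1,t_2,x+z)\big)-\Phi'\big(x-y(t_1,t_2,x)\big)$; since $y(t_1,t_2,x+z)\ge y(t_1,t_2,x)$ the argument of $\Phi'$ increases by at most $z$, and as $\Phi'$ is nondecreasing and $\Lip(\Phi')$-Lipschitz the difference is bounded by $\Lip(\Phi')\,z$.

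The main obstacle is the monotonicity step together with the attendant uniqueness and countability bookkeeping: the strict convexity of $\Phi$ is what delivers, all at once, the rearrangement contradiction above, the uniqueness of minimizers at differentiability points, and — through semiconcavity of $w(t_1,t_2,\cdot)$ — the upgrade from ``almost every $x$'' to ``all but at most countably many $x$''. The remaining computations (the Legendre-regularity of $\Phi$ and the one-sided increment estimate) are routine once these structural facts are in place.
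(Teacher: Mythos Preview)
Your argument is correct and reaches the same conclusions, but the mechanics differ from the paper's in three places. First, to obtain $u=\Phi'(x-y)$ the paper differentiates $\Phi(x-y(t_1,t_2,x))+g(y(t_1,t_2,x))$ via the chain rule and then cancels the $y_x$-terms using the first-order optimality condition at the minimizer; your envelope comparison (freeze $y$, vary $x$) bypasses the need to differentiate $y(\cdot)$ or to invoke $g'$ at the minimizer, which is cleaner. Second, for the ``all but countably many $x$'' clause the paper argues via monotonicity of $x\mapsto y(t_1,t_2,x)$ (a monotone function has at most countably many discontinuities, and at continuity points the minimizer is unique), whereas you deduce it from semiconcavity of $w(t_1,t_2,\cdot)$; both are standard, and yours yields the one-sided Lipschitz bound on $u$ essentially for free. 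Third, the monotonicity of the minimizer is obtained in the paper by a one-sided claim (for $x_1<x_2$ and $y<y_1$, the competitor $y$ is strictly worse at $x_2$) proved through a specific convex combination, while you use the symmetric exchange inequality $\Phi(x_1-y_1)+\Phi(x_2-y_2)\le\Phi(x_1-y_2)+\Phi(x_2-y_1)$ and strict convexity; these are equivalent rearrangement arguments. Overall your route is slightly more self-contained, while the paper's route makes the role of the selection $y(t_1,t_2,x)=\min\{\text{argmin}\}$ more explicit.
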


\begin{definition}
\label{LOF} Equation \eqref{SCLLO} is called the multi-time
Lax-Oleinik formula.
\end{definition}

\begin{proof}
1. Fix $t_1,t_2>0$, $x_1<x_2$. There exists at least one point
$y_1 \in \R$, such that
\begin{equation}
\label{SCLLO0}
  w(t_1,t_2,x_1)= \big(t_1 H_1 + t_2 H_2 \big)^*(x_1-y_1) + g(y_1).
\end{equation}
Now, we claim that, for each $y < y_1$,
$$
  \big(t_1 H_1 + t_2 H_2 \big)^*(x_2-y_1) + g(y_1)
  <
  \big(t_1 H_1 + t_2 H_2 \big)^*(x_2 - y) + g(y).
$$
Indeed, let $\tau \in (0,1)$, given by
$$
  \tau= \frac{y_1 -y}{(x_2 -x_1) + (y_1 - y)},
$$
and for convenience, we write
$$
  \begin{aligned}
  x_2 - y_1&= \tau \, (x_1 - y_1) + (1-\tau) \, (x_2 - y), \\
 x_1 - y&= (1-\tau) \, (x_1 - y_1) + \tau \, (x_2 - y).
  \end{aligned}
$$
Therefore, since $(H_i^*)''>0$ $(i=1,2)$, it follows that
$$
  \begin{aligned}
\big(\bold{t} \cdot \bold{H} \big)^*(x_2 - y_1)&< \tau \,
\big(\bold{t} \cdot \bold{H} \big)^*(x_1 - y_1)
  + (1-\tau) \, \big(\bold{t} \cdot \bold{H} \big)^*(x_2 - y), \\
 \big(\bold{t} \cdot \bold{H} \big)^*(x_1 - y)&<  (1-\tau)
 \, \big(\bold{t} \cdot \bold{H} \big)^*(x_1 - y_1)
 + \tau \, \big(\bold{t} \cdot \bold{H} \big)^*(x_2 - y).
  \end{aligned}
$$
Then, combining the two above inequalities, we obtain
\begin{equation}
\label{SCLLO1}
\begin{aligned}
   \big(\bold{t} \cdot \bold{H} \big)^*(x_2 - y_1) &+
   \big(\bold{t} \cdot \bold{H} \big)^*(x_1 - y)
   \\
   &< \big(\bold{t} \cdot \bold{H} \big)^*(x_1 - y_1)
  + \big(\bold{t} \cdot \bold{H} \big)^*(x_2 - y).
\end{aligned}
\end{equation}
Moreover, by the definition of $w(t_1,t_2,x_1)$, we have
\begin{equation}
\label{SCLLO2}
\begin{aligned}
  -\big(t_1 H_1 + t_2 H_2 \big)^*(x_1 - y) - g(y)
   \leq -\big(t_1 H_1 + t_2 H_2 \big)^*(x_1 - y_1) - g(y_1).
\end{aligned}
\end{equation}
Then, from \eqref{SCLLO1} and \eqref{SCLLO2}
$$
  \big(t_1 H_1 + t_2 H_2 \big)^*(x_2 - y_1) + g(y_1)
   < \big(t_1 H_1 + t_2 H_2 \big)^*(x_2 - y) + g(y),
$$
and so the claim is proved.

\medskip
2. From the claim proved before, we observe that to compute the
minimum below, i.e.
$$
  \min_{y \in \R}
 \left\{
 \big(t_1 H_1 + t_2 H_2 \big)^*(x_2-y)
 + g(y) \right\},
$$
we only need to consider those $y \geq y_1$, where $y_1$ satisfies
\eqref{SCLLO0}. Therefore, for each $t_1,t_2>0$ and $x \in \R$, we
could define the point $y(t_1,t_2,x)$ equal to the smallest value
of those points $y$ giving the minimum of
$$
 \big(t_1 H_1 + t_2 H_2 \big)^*(x-y)
 + g(y).
$$
Consequently, for each $t_1,t_2>0$, the mapping $x \mapsto
y(t_1,t_2,x)$ is nondecreasing, thus continuous for all but at
most countably many $x \in \R$. Moreover, at o such point $x$, the
value $y(t_1,t_2,x)$ is the unique those $y$ yielding the minimum.

\medskip
3. Since the function $w$ is Lipschitz, thus differentiable a.e.
and the mapping $x \mapsto y(t_1,t_2,x)$ is monotone and so
differentiable a.e. as well, given $t_1,t_2>0$, for a.e. $x \in
\R$, the mappings
$$
  \begin{aligned}
  x &\mapsto \big(t_1 H_1 + t_2 H_2\big)^*(x - y(t_1,t_2,x)),
  \\
  x &\mapsto g(y(t_1,t_2,x))
  \end{aligned}
$$
are also differentiable for a.e. $x \in \R$. Then, we have for
such a differentiable point $x$,
$$
  \begin{aligned}
  u(t_1,t_2,x)&= \partial_x \Big( \big(t_1 H_1 + t_2 H_2 \big)^*
  (x-y(t_1,t_2,x)) + g(y) \Big)
 \\
 &= \Big( \big( t_1 H_1 + t_2 H_2 \big)^* \Big)'
 (x-y(t_1,t_2,x))
 \; \Big(1 - y_x(t_1,t_2,x)\Big)
 \\
  &+ \partial_x\Big(g(y(t_1,t_2,x))\Big).
  \end{aligned}
$$
But since the mapping $y \mapsto (t_1 H_1 + t_2 H_2)^* + g$ has a
minimum at $y= y(t_1,t_2,x)$, it follows that
$$
 - \Big(\big(t_1 H_1 + t_2 H_2 \big)^*\Big)'(x-y(t_1,t_2,x)) \;
 y_x(t_1,t_2,x)
 + \partial_x\big(g(y(t_1,t_2,x))\big)= 0,
$$
and thus we obtain \eqref{SCLLO}.

\medskip
4. Finally, by equation \eqref{SCLLO}, the monotonicity of
$\big((t_1 H_1 + t_2 H_2)^*\big)'$ and $y(t_1,t_2,\cdot)$ as well,
we have for each $z>0$
$$
  \begin{aligned}
  u(t_1,t_2,x)&= \big((t_1 H_1 + t_2 H_2)^*\big)'(x-y(t_1,t_2,x))
 \\
 &\geq \big((t_1 H_1 + t_2 H_2)^*\big)'(x-y(t_1,t_2,x+z))
 \\
 &\geq \big((t_1 H_1 + t_2 H_2)^*\big)'(x+z-y(t_1,t_2,x+z))
 \\
 &\quad -
 \Lip\big(\big((t_1 H_1 + t_2 H_2)^*\big)'\big) \; z
 \\
 &= u(t_1,t_2,x+z) -  \Lip\big(\big((t_1 H_1 + t_2 H_2)^*\big)'\big) \; z.
  \end{aligned}
$$
Therefore, we obtain
$$
  u(t_1,t_2,x+z) - u(t_1,t_2,x)\leq \Lip\big(\big((t_1 H_1 + t_2 H_2)^*\big)'\big) \; z.
$$
%
\end{proof}

\bigskip
\subsection{Existence}
Now we are ready to show the solvability of the multi-time system
of conservation laws in $1D$ for two independent times. First, let
us define in which sense a bounded and measurable real function
$u$ defined in $(0,\infty)^2 \times \R$ is a weak (integral)
solution of \eqref{MTSCL1D}.

\begin{definition}
\label{SDMTCL1D} Given $u_0 \in L^\infty(\R)$, a function $u \in
L^\infty((0,\infty)^2 \times \R)$ is said a weak integral solution
of the Cauchy problem \eqref{MTSCL1D}, if it satisfies

$\bullet$ Multi-time conservation laws: For all $\varphi \in
C_0^\infty((0,\infty)^2 \times \R)$
\begin{equation}
 \int_0^\infty \!\!\!\!\int_0^\infty \!\!\!\!\int_\R
 \big( u \, \varphi_{t_1} + H_1(u) \, \varphi_x \big) \, dx dt_1 dt_2= 0,
\end{equation}
\begin{equation}
 \int_0^\infty \!\!\!\!\int_0^\infty \!\!\!\!\int_\R
 \big( u \, \varphi_{t_2} + H_2(u) \, \varphi_x \big) \, dx dt_1 dt_2=
 0.
\end{equation}

$\bullet$ Initial condition: For any $\gamma \in L^1(\R)$
\begin{equation}
\label{ICCL} \ess \!\!\!\!\!\! \lim_{t_1,t_2 \to 0^+} \int_\R
\big( u(t_1,t_2,x) - u_0(x) \big) \, \gamma(x) \, dx= 0.
\end{equation}
\end{definition}

\bigskip
\begin{theorem}
\label{TMTCLCP} The function $u \in L^\infty((0,\infty)^2 \times
\R)$ given by Lemma \ref{MTLOF}, equation \eqref{SCLLO}, is a weak
solution of the Cauchy problem \eqref{MTSCL1D}.
\end{theorem}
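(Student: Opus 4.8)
The plan is to exploit the results already proved for the Hamilton--Jacobi side. By Theorem \ref{GHLFET}, the Lax formula $w$ defined by \eqref{LFMHJE} (with $g$ as in \eqref{GU0}) is Lipschitz on $(0,\infty)^2\times\R$ and satisfies the system a.e., i.e. $w_{t_1}=-H_1(w_x)$ and $w_{t_2}=-H_2(w_x)$ almost everywhere; here $Dw=w_x$ since $d=1$. The function under consideration is $u=w_x$, so the formal differentiation in $x$ that produced \eqref{DHJS} must be replaced by a rigorous distributional argument, and the Lipschitz regularity of $w$ is precisely what legitimizes this: $w\in W^{1,\infty}_{\loc}$, so its distributional derivatives are represented by the a.e. classical partials, and integration by parts against smooth compactly supported test functions is valid with no boundary terms. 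From the spatial Lipschitz bound \eqref{LCG} I also record $\|u(t_1,t_2,\cdot)\|_\infty\le\Lip(g)=\|u_0\|_\infty$, which makes $u$ and $H_i(u)$ bounded, so all the integrals below are finite.

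For the two conservation laws, fix $\varphi\in C_0^\infty((0,\infty)^2\times\R)$. Writing $u=w_x$ and using the a.e. identity $H_1(u)=H_1(w_x)=-w_{t_1}$, the integrand $u\,\varphi_{t_1}+H_1(u)\,\varphi_x$ equals $w_x\,\varphi_{t_1}-w_{t_1}\,\varphi_x$ a.e. Integrating over all variables and transferring the $x$-derivative off $w_x$ in the first term and the $t_1$-derivative off $w_{t_1}$ in the second (permissible for the Lipschitz function $w$, with boundary terms vanishing since $\varphi$ is compactly supported inside the open domain), both contributions become $\mp\iiint w\,\varphi_{t_1 x}\,dx\,dt_1\,dt_2$, which cancel because $\varphi_{t_1 x}=\varphi_{x t_1}$. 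Hence the first weak identity holds; the second follows verbatim with $H_2$ and the a.e. identity $H_2(u)=-w_{t_2}$.

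For the initial condition I would first upgrade the pointwise limit \eqref{IDMTHJE} to uniform convergence: the two-sided estimate \eqref{L3LT2} bounds $g(x)-w(t_1,t_2,x)$ between two quantities that are independent of $x$ and tend to $0$ as $t_1,t_2\to 0^+$ (using $H_i(0)=0$ together with the convexity and coercivity of the $H_i$), so $w(t_1,t_2,\cdot)\to g$ uniformly on $\R$. For $\gamma\in C_0^\infty(\R)$, integration by parts in $x$ then gives
$$
  \int_\R u(t_1,t_2,x)\,\gamma(x)\,dx=-\int_\R w(t_1,t_2,x)\,\gamma'(x)\,dx\ \to\ -\int_\R g\,\gamma'\,dx=\int_\R u_0\,\gamma\,dx,
$$
since $\gamma'$ is integrable with fixed compact support. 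To reach an arbitrary $\gamma\in L^1(\R)$ I would use the uniform bound $\|u(t_1,t_2,\cdot)-u_0\|_\infty\le 2\|u_0\|_\infty$ and the density of $C_0^\infty$ in $L^1$: approximating $\gamma$ by smooth $\gamma_\varepsilon$, the error is controlled by $2\|u_0\|_\infty\,\|\gamma-\gamma_\varepsilon\|_{L^1}$, which yields \eqref{ICCL} (a fortiori the essential limit).

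The main obstacle I anticipate is not a single sharp estimate but the careful passage from the merely a.e. (and, in \eqref{DHJS}, formal) Hamilton--Jacobi identities to the weak formulation. The crux is to phrase everything through integration by parts for the Lipschitz function $w$ --- never differentiating $H_1(w_x)$ or $H_2(w_x)$ in $x$, but instead moving all derivatives onto the smooth $\varphi$ --- so that the only regularity genuinely used is $w\in W^{1,\infty}_{\loc}$ and the a.e. validity of the system from Theorem \ref{GHLFET}.
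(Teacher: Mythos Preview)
Your proof is correct and follows essentially the same route as the paper: both exploit the Lipschitz regularity of the Lax formula $w$ from Theorem~\ref{GHLFET} and the a.e.\ Hamilton--Jacobi identities, then shift all derivatives onto the test function via integration by parts (your two-term cancellation $\mp\!\iiint w\,\varphi_{t_1 x}$ is just a rearrangement of the paper's manipulation $\iiint w_{t_1}\varphi_x=\iiint w_x\varphi_{t_1}$). Your treatment of the initial condition, using the uniform bound \eqref{L3LT2}, integration by parts against smooth $\gamma$, and a density argument, is a legitimate and more explicit version of what the paper leaves as ``the same strategy\dots using \eqref{IDMTHJE}''.
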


\begin{proof}
First, we define for $t_1,t_2>0$ and $x \in \R$,
$$
  w(t_1,t_2,x)= \min_{y \in \R}
 \left\{
 (t_1 H_1 + t_2 H_2 )^*(x-y)
 + g(y) \right\},
$$
which by Theorem \ref{GHLFET} is a Lipschitz continuous function,
differentiable a.e in $(0,\infty)^2 \times \R$, and solves
\begin{equation}
\label{PMTCL}
  \begin{aligned}
      w_{t_1} + H_1(w_x) &= 0 \qquad \text{a.e. in $(0,\infty)^2 \times
      \R$}, \\[5pt]
      w_{t_2} + H_2(w_x) &= 0 \qquad \text{a.e. in $(0,\infty)^2 \times
      \R$}, \\[5pt]
      w(0,0,x) &= g(x) \quad \text{on $\R$}.
   \end{aligned}
\end{equation}

Now, we take $\varphi \in C_0^\infty ((0,\infty)^2 \times \R)$
multiply the first equation in \eqref{PMTCL} by $\varphi_x$ and
integrate over $(0,\infty)^2 \times \R$, to obtain
$$
   \int_0^\infty \!\!\!\!\int_0^\infty \!\!\!\!\int_\R
 \big( w_{t_1} \, \varphi_x + H_1(w_x) \, \varphi_x \big) \, dx dt_1 dt_2=
 0.
$$
Then, we observe that
$$
  \begin{aligned}
   \int_0^\infty \!\!\!\!\int_0^\infty \!\!\!\!\int_\R
   w_{t_1} \, \varphi_x \, dx dt_1 dt_2
   &= -
  \int_0^\infty \!\!\!\!\int_0^\infty \!\!\!\!\int_\R
   w \, \varphi_{t_1 x} \, dx dt_1 dt_2
   \\[5pt]
   &=
   \int_0^\infty \!\!\!\!\int_0^\infty \!\!\!\!\int_\R
   w_{x} \, \varphi_{t_1} \, dx dt_1 dt_2,
  \end{aligned}
$$
where we are allowed to integrate by parts, since the mapping $x
\mapsto w(t_1,t_2,x)$ is Lipschitz continuous and then, absolutely
continuous for each $t_1,t_2>0$. Moreover, for each $t_2>0$ and $x
\in \R$, the mapping $t_1 \mapsto w(t_1,t_2,x)$ is also absolutely
continuous. Therefore, we have
$$
 \int_0^\infty \!\!\!\!\int_0^\infty \!\!\!\!\int_\R
 \big( w_x \, \varphi_{t_1} + H_1(w_x) \, \varphi_x \big) \, dx dt_1 dt_2= 0,
$$
and by similarly argument, we obtain
$$
 \int_0^\infty \!\!\!\!\int_0^\infty \!\!\!\!\int_\R
 \big( w_x \, \varphi_{t_2} + H_2(w_x) \, \varphi_x \big) \, dx dt_1 dt_2=
 0.
$$
Finally, we recall that $u= w_x$ $a.e.$ as precisely defined by
\eqref{SCLLO}. Then, the Multi-time conservation laws condition at
Definition \ref{SDMTCL1D} is satisfied.

 To show the initial-condition, we apply the same strategy before and the result follows using \eqref{IDMTHJE}.
\end{proof}

\bigskip
\subsection{Uniqueness}
We show the existence of a weak integral solution $u$ to the
problem \eqref{MTSCL1D}, where $u$ is given by \eqref{SCLLO}.
Recall that, the integral solution is slight different from the
entropy solution given by Definition \ref{ENTROPYSOLUTION}, that
is, a measurable and bounded function $u(t_1,t_2,x)$ is an entropy
solution to \eqref{MTSCL1D}, if for all entropy pair
$(\eta(u),q_i(u))$ $(i=1,2)$, and for each $T>0$, the following
holds true
\begin{equation}
 \int_0^T \!\!\!\!\int_0^T \!\!\!\!\int_\R
 \big( \eta(u) \, \varphi_{t_1} + q_1(u) \, \varphi_x \big) \, dx dt_1 dt_2 \geq 0,
\end{equation}
\begin{equation}
 \int_0^T \!\!\!\!\int_0^T \!\!\!\!\int_\R
 \big( \eta(u) \, \varphi_{t_2} + q_2(u) \, \varphi_x \big) \, dx dt_1
 dt_2\geq 0,
\end{equation}
for each non-negative test function $\varphi \in
C_0^\infty((0,T)^2 \times \R)$, and also the initial-condition
\eqref{ICCL} is satisfied. It follows by \eqref{SCLLOF} that, for
each $t_1,t_2 \in (0,T)$ fixed, $u(t_1,t_2,\cdot)$ has locally
bounded variation. Indeed, we know that for each $z>0$
$$
  \frac{u(t_1,t_2,x+z) - u(t_1,t_2,x)}{z} \leq c,
$$
where $c:= \Lip\big(\big((t_1 H_1 + t_2 H_2)^*\big)'\big)$. Let,
$\td{u}(t_1,t_2,x)= u(t_1,t_2,x) - \td{c} \, x$, for $\td{c} > c$.
Then, we have for each $z>0$
$$
  \td{u}(t_1,t_2,x+z)-\td{u}(t_1,t_2,x) < 0,
$$
that is, $\td{u}(t_1,t_2,\cdot)$ is a decreasing function and
hence has locally bounded total variation. Since this is also true
for $\td{c} \, x$, we obtain that $u(t_1,t_2,\cdot)$ has locally
bounded variation.
Therefore, the well-known theory of Vol'pert \cite{AIV} allow us
to apply the chain rule for $BV$ functions, and write for a.e. $x
\in \R$, $i=1,2$
$$
  \partial_x H_i(u(t_1,t_2,x))= H'_i(u(t_1,t_2,x)) \;
  (u(t_1,t_2,x))_x,
$$
and thus since $u$ is an integral solution, we have in the sense
of measures
\begin{equation}
\label{MTSCL1DSD}
      |u_{t_i}| \leq \max_{\xi \in B_{\|u\|_\infty}(0)} |H'_i(\xi)| \;\;
      |u_x|,
\end{equation}
that is to say, $u_{t_1}, u_{t_2}$ are locally Radon measures.

\bigskip
Now, let $\eta$ be a smooth convex function. Again with no loss of
generality we may as well also take $\eta(0)= 0$. Then, we
multiply \eqref{MTSCL1DSD} by $\eta'(u)$, and apply again the
chain rule for BV functions to obtain in the measure sense
\begin{equation}
\label{MTSCL1DES}
   \begin{aligned}
      \eta(u)_{t_1} + \partial_x q_1(u) &= 0, \\[5pt]
      \eta(u)_{t_2} + \partial_x q_2(u) &= 0.
   \end{aligned}
\end{equation}
Consequently, it is not difficult to see that, the integral
solution $u$ is in fact an entropy solution, where it is crucial
the estimate \eqref{MTSCL1DSD} in order to show the initial data
\eqref{idcles}. Moreover, by a standard approximation procedure,
we may assume that the pair $(\eta,q_i)$ $(i=1,2)$ are the Kruzkov
entropies, that is,
\begin{equation}
 \int_0^T \!\!\!\!\int_0^T \!\!\!\!\int_\R
 \big( |u-v| \, \varphi_{t_1} + \sgn(u-v) \big(H_1(u)-H_1(v)\big)
  \, \varphi_x \big) \, dx dt_1 dt_2= 0,
\end{equation}
\begin{equation}
 \int_0^T \!\!\!\!\int_0^T \!\!\!\!\int_\R
\big( |u-v| \, \varphi_{t_2} + \sgn(u-v) \big(H_2(u)-H_2(v)\big)
  \, \varphi_x \big) \, dx dt_1 dt_2= 0,
\end{equation}
for each $v \in \R$ fixed and all test function $\varphi \in
C_0^\infty((0,T)^2 \times \R)$.
Therefore, we are in position to apply the doubling variables
technic due to Kruzkov, see \cite{KRUZKOV}. In fact, this is
nowadays a standard procedure, thus adapted to our case leads to
the following result

\begin{lemma}
Let $u$ and $v$ be two entropy solutions to the problem
\eqref{MTSCL1D} corresponding to initial data $u_0$, $v_0$
respectively. Then, we have the $L^1$-contraction type
inequalities
\begin{equation}
\label{CONTRAC}
  \begin{aligned}
 \int_0^T \!\!\!\!\int_{B_R(0)}
 |u(t_1,\tau,x)-&v(t_1,\tau,x)| \; \zeta_2(\tau) \; dx d\tau
 \\
   & \qquad \leq \int_0^T \!\!\!\!\int_{B_{R_1}(0)}
|u(0,\tau,x)-v(0,\tau,x)| \; \zeta_2(\tau) \;dxd\tau,
\\[5pt]
 \int_0^T \!\!\!\!\int_{B_R(0)}
 |u(\tau,t_2,x)-&v(\tau,t_2,x)| \; \zeta_1(\tau) \;dxd\tau
\\
  & \qquad \leq \int_0^T \!\!\!\!\int_{B_{R_2}(0)}
|u(\tau,0,x)-v(\tau,0,x)| \; \zeta_1(\tau) \;dxd\tau,
  \end{aligned}
\end{equation}
which holds for all ball $B_R(0)$, $R>0$ and almost all $t_1,t_2
>0$, where for $i=1,2$, $\zeta_i \in C_0^\infty(0,T)$,
$B_{R_i}= B_{R+M_i t_i}(0)$, and $M_i$ denotes the Lipschitz
constant of $H_i$.
\end{lemma}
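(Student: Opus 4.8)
The plan is to establish the two $L^1$-contraction inequalities in (\ref{CONTRAC}) by the standard Kruzkov doubling-of-variables argument, applied separately to each time direction. Since the entropy inequalities (with the Kruzkov entropy pairs $|u-v|$ and $\sgn(u-v)(H_i(u)-H_i(v))$) hold in each of the two time variables, and the two equations are structurally decoupled apart from sharing the common spatial derivative, I would treat the first inequality (the one propagating in $t_1$, with $t_2=\tau$ held as a parameter) and note that the second follows by symmetry, interchanging the roles of $t_1,t_2$ and of $H_1,H_2$.

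For the first inequality I would fix the parameter $\tau$ (equivalently the weight $\zeta_2(\tau)$) and regard $u(\cdot,\tau,\cdot)$ and $v(\cdot,\tau,\cdot)$ as entropy solutions of a \emph{single-time} scalar conservation law in $(t_1,x)$ with flux $H_1$. I would double the variables by introducing independent copies $(t_1,x)$ and $(s_1,y)$, write the Kruzkov entropy inequality for $u$ in $(t_1,x)$ against $v(s_1,y)$ treated as a constant, and the symmetric one for $v$ in $(s_1,y)$ against $u(t_1,x)$. Adding them and testing with a product cutoff $\psi(t_1,s_1,x,y)=\varphi\!\big(\tfrac{t_1+s_1}{2},\tfrac{x+y}{2}\big)\,\delta_h(t_1-s_1)\,\delta_k(x-y)$, built from mollifiers $\delta_h,\delta_k$ and a spatial--temporal test function $\varphi$, the off-diagonal terms involving $\sgn(u-v)\big(H_1(u)-H_1(v)\big)$ combine so that, as $h,k\to0$, one obtains the transported inequality
\begin{equation*}
\partial_{t_1}\!\!\int |u-v|\,\varphi\,dx
 + \text{(flux term bounded by }M_1\text{)}\ \le\ 0
\end{equation*}
in the distributional sense, where $M_1=\Lip(H_1)$ controls the finite speed of propagation. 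Integrating this against $\zeta_2(\tau)$ in the parameter and over $\tau\in(0,T)$, then choosing $\varphi$ to approximate the indicator of the backward light cone over $B_R(0)$ with slope $M_1$, yields exactly the first inequality with $B_{R_1}=B_{R+M_1 t_1}(0)$.

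The main obstacle I anticipate is justifying the initial trace and the passage to the cone estimate rigorously in the multi-time setting, since the initial condition here is prescribed only jointly at $(t_1,t_2)=(0,0)$ rather than along a full time slice. Concretely, when I restrict to the slice $t_1=0$ with $t_2=\tau>0$ as a parameter, I must make sense of $u(0,\tau,x)$ as a genuine $L^1_{\loc}$ trace; this is where the BV regularity established in Lemma \ref{MTLOF} via (\ref{SCLLOF})--(\ref{MTSCL1DSD}) is essential, because it guarantees that $u(\cdot,\tau,\cdot)$ has a well-defined trace on $\{t_1=0\}$ and that $u_{t_1}$ is a locally finite Radon measure, so the boundary terms arising in the integration by parts are meaningful. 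The remaining steps—the mollification limits, the telescoping of the entropy fluxes, and the standard Gronwall-free cone argument—are by now routine, so I would present them compactly and devote the care to the trace and the finite-speed-of-propagation geometry that produces the dilated balls $B_{R_i}=B_{R+M_i t_i}(0)$.
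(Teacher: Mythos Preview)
Your proposal is correct and follows exactly the route the paper indicates: the paper does not give a detailed proof of this lemma but simply states that one is ``in position to apply the doubling variables technic due to Kruzkov'' and that this ``is nowadays a standard procedure, thus adapted to our case leads to the following result.'' Your plan---freezing one time variable via the weight $\zeta_i$, running the standard Kruzkov doubling in the remaining $(t_i,x)$ variables, and closing with the finite-speed cone argument to produce the dilated balls $B_{R+M_i t_i}(0)$---is precisely this adaptation, and your attention to the trace at $\{t_i=0\}$ via the BV regularity from \eqref{SCLLOF}--\eqref{MTSCL1DSD} is the only genuinely new ingredient the multi-time setting demands.
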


\begin{theorem}
\label{UNQTCL} Let $u$ and $v$ be two entropy solutions to the
problem \eqref{MTSCL1D} corresponding to initial data $u_0$, $v_0$
respectively. If $u_0= v_0$ almost everywhere, then $u= v$ almost
everywhere.
\end{theorem}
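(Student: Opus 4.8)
The plan is to derive the result directly from the two $L^1$-contraction inequalities \eqref{CONTRAC} of the preceding Lemma, using them to transport the equality of the initial data from the corner $(0,0)$ out to an arbitrary point $(t_1,t_2)$. The crucial observation is that the right-hand sides of \eqref{CONTRAC} only involve the traces of $u$ and $v$ on the coordinate hyperplanes $\{t_1=0\}$ and $\{t_2=0\}$, so it suffices to show that these traces already coincide once $u_0=v_0$.

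First I would fix $T>0$, a ball $B_R(0)$, and a non-negative weight $\zeta_2\in C_0^\infty(0,T)$, and examine the first inequality in \eqref{CONTRAC}, whose right-hand side integrates $|u(0,\tau,x)-v(0,\tau,x)|$ over $(\tau,x)$. On the hyperplane $\{t_1=0\}$ only the second equation of \eqref{MTSCL1D} is intrinsic, so $u(0,\cdot,\cdot)$ and $v(0,\cdot,\cdot)$ are, as functions of $(t_2,x)$, entropy solutions of the single-time scalar conservation law $\partial_{t_2}w+\partial_x H_2(w)=0$, with initial traces at $t_2=0$ equal to $u_0$ and $v_0$ by the initial condition \eqref{ICCL}. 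Since $u_0=v_0$ almost everywhere, the classical Kruzkov $L^1$-contraction for a single time variable forces $u(0,\tau,x)=v(0,\tau,x)$ for a.e. $(\tau,x)$; hence the right-hand side of the first inequality in \eqref{CONTRAC} vanishes.

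Consequently the left-hand side vanishes for almost every $t_1>0$, i.e.
\[
  \int_0^T\!\!\int_{B_R(0)} |u(t_1,\tau,x)-v(t_1,\tau,x)|\,\zeta_2(\tau)\,dx\,d\tau = 0 .
\]
Because $R>0$, $T>0$ and $\zeta_2\ge 0$ are arbitrary, this yields $u(t_1,t_2,x)=v(t_1,t_2,x)$ for a.e. $(t_1,t_2,x)\in(0,\infty)^2\times\R$, which is the assertion. The same conclusion follows symmetrically from the second inequality in \eqref{CONTRAC}, interchanging the roles of the two time variables and using the slice $\{t_2=0\}$.

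The step I expect to be the main obstacle is the justification, invoked above, that the restriction of $u$ to $\{t_1=0\}$ is genuinely an entropy solution of the single-time problem with initial datum $u_0$. This is precisely where the earlier regularity is indispensable: the one-sided bound \eqref{SCLLOF} gives $u(t_1,t_2,\cdot)\in BV_{\loc}$, and together with \eqref{MTSCL1DSD} it shows that $u_{t_1},u_{t_2}$ are local Radon measures, so that $u$ is locally $BV$ jointly in $(t_1,t_2,x)$ and admits traces on $\{t_1=0\}$. One must then verify that localizing the second family of entropy inequalities in $t_1$ near $0$ reproduces the single-time entropy inequalities on the slice, and that the joint limit in \eqref{ICCL} identifies the $t_2\to 0^+$ trace of that slice with $u_0$. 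Granting these trace facts, the contraction argument above closes the proof.
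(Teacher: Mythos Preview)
Your argument is in principle workable, but it takes a different and heavier route than the paper's. The paper never leaves the multi-time framework and never invokes single-time Kruzkov theory on a slice. Instead it uses \emph{both} inequalities in \eqref{CONTRAC}, in sequence. First, in the second inequality it localizes the weight $\zeta_1$ to a shrinking interval $(0,\delta)$; after dividing by $\delta$ and letting $\delta\to 0^+$, the right-hand side collapses to $\int_{B_{R_2}}|u(0,0,x)-v(0,0,x)|\,dx=\int|u_0-v_0|=0$, while the left-hand side gives $\int_{B_R}|u(0,t_2,x)-v(0,t_2,x)|\,dx=0$. This already yields the coincidence of the traces on $\{t_1=0\}$ that you obtain via an external appeal to single-time Kruzkov. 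Then the first inequality in \eqref{CONTRAC}, with $\zeta_2$ localized near a fixed $t_2$, finishes the proof exactly as in your last paragraph.

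The trade-off is clear: your approach needs only one of the two contraction inequalities but pays for it with the trace justification you yourself flag as the main obstacle---showing that $u(0,\cdot,\cdot)$ is a genuine single-time entropy solution with initial datum $u_0$, which requires passing entropy inequalities through the $t_1\to 0$ limit. The paper's approach uses both inequalities but needs only the much lighter fact that the $L^1$ traces at $t_1=0$ (and at $t_2=0$) exist and are picked up by the averaging limit, which follows from the $BV$ regularity already established. In short, your detour through single-time uniqueness is unnecessary: the second inequality in \eqref{CONTRAC} is precisely the tool that makes it redundant.
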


\begin{proof}
For $\delta >0$, we take $\zeta_1(\tau)= \chi_{(0,\delta)}(\tau)$
in the second inequality of \eqref{CONTRAC}. Then, dividing by
$\delta$ both sides of the inequality, and passing to the limit as
$\delta \to 0^+$, we obtain
$$
  \int_\R
  |u(0,t_2,x)-v(0,t_2,x)| \; dx= 0.
$$
Similarly, for $\theta>0$ sufficiently small, we take
$\zeta_2(\tau)= \chi_{(t_2-\theta,t_2+\theta)}(\tau)$ in the first
inequality of \eqref{CONTRAC}. Again dividing the inequality by
$\theta$ and passing to limit as $\theta$ goes to $0^+$, the
uniqueness result follows, that is, $u \equiv v$ almost
everywhere.
\end{proof}

\section*{Acknowledgements}
Aldo Bazan is supported by FAPERJ by the grant 2009.2848.0.
Wladimir Neves is partially supported by FAPERJ through the grant
E-26/ 111.564/2008 entitled {\sl ``Analysis, Geometry and
Applications''} and by
Pronex-FAPERJ through the grant E-26/ 110.560/2010 entitled \textsl{%
"Nonlinear Partial Differential Equations"}.


%


\begin{thebibliography}{99}

\bibitem{ABI}  Alvarez, O., Barron, E. N., Ishii, H.,
\emph{Hopf-Lax formulas for semicontinuous data}, Indiana Univ.
Math. J., 48, no. 3 (1999), 993--1035.

\bibitem{MBLCE} Bardi, M., Evans, L. C., \emph{On Hopf's formulas
for solutions of Hamilton Jacobi equations}, Nonlinear Analysis,
TMA 8, (1984), 1373--1381.

\bibitem {BT} Barles, G., Tourin, A.,
\emph{Commutation properties of semigroups for first-order
Hamilton-Jacobi equations and application to multi-time
equations}, Indiana Univ. Math. J., 50, no. 4 (2001), 1523--1544.

\bibitem{FCCV} Cardin, F., Viterbo, C.,
\emph{Commuting Hamiltonians and Hamilton-Jacobi multi-time
equations}, Duke Math. Journal, 144, no 2, (2008), 235--284.

\bibitem{CEL} Crandall, M.G., Evans, L.C., Lions, P.L.,
\emph{Some properties of viscosity solutions of Hamilton-Jacobi
eqautions,} Trans. American Math. Society, 282 (1984), 487--502.

\bibitem{EG} Evans, L. C., Gariepy, R.F.,
\emph{Measure Theory and Fine Properties of Functions}, CRC Press,
Inc., 1992.

\bibitem{LCE} Evans, L. C., \emph{Partial Differential Equations},
American Mathematical Society, Providence, Rhode Island, 1998.

\bibitem{GCH} Gu, G. Q., Chung, K. H., Hui, P. M.,
\emph{Two-dimensional traffic flow problems in inhomogeneous
lattices}, Physica A, 217, (1995), 339--347.

\bibitem{SSG} Gubser, S. S., \emph{The Little Book of String
Theory}, Princeton University Press, 2010.

\bibitem{CIMV} Imbert, C., Volle, M.,
\emph{On vectorial Hamilton-Jacobi equations}, Control and
Cybernetics, 31, no. 3 (2002), 493--503.

\bibitem{KRUZKOV} Kruzkov S.,
\emph{First-order quasilinear equations with several space
variables,} Math. USSR Sb. 10 (1970), 217--243.

\bibitem{PLLJCR} Lions, P. L., Rochet, J-C.,
\emph{Hopf formula and multitime Hamilton-Jacobi equations}, Proc.
of American Mathmatical Socity, 96, no.1 (1986), 79--84.

\bibitem{JJM} Moreau, J.J., \emph{Fonctionnelles convexes},
Séminaire Jean Leray, no. 2 (1966-1967), 1--108.

\bibitem{MMFR} Motta, M., Rampazzo, F.
\emph{Nonsmooth multi-time Hamilton-Jacobi systems}, Indiana Univ.
Math. J., 55, no 5, (2006), 1573--1614.

\bibitem{MNCU} Neagu, M., Udriste, C.
\emph{A Riemann-Lagrange geometrization for metrical multi-time
Lagrange spaces}, \; Balkan J. Geom. Appl., 11, no. 11 (2006),
87--98.

\bibitem{SPMQ} Plaskacz, S., Quincampoix, M., \emph{Oleinik-Lax
formulas and multitime Hamilton-Jacobi systems}, Nonlinear
Analysis, 51, (2002), 957--967.

\bibitem{RTR} Rockafellar, R.T., \emph{Convex Analysis}, Princeton
University Press, Princeton, New Jersey, 1970.

\bibitem{JCR} Rochet, J. C., \emph{The taxation principle and
multi-time Hamilton-Jacobi Equations}, J. of Math. Econ., 14
(1985), 113--128.

\bibitem{JS} Stickforth,
\emph{The Kepler problem in relativistic multi-time formalism.
Part II}, Acta Mechanica, 123, (1997), 195--201.

\bibitem{AIV} Vol'pert, A.I.,
\emph{The spaces BV and quasilinear equations}, Math. USSR
Sbornik, {\bf 2}, (1967), 225--267.

\bibitem{MZ} Zavidovique, M.,
\emph{Weak KAM for commuting Hamiltonians}, Nonlinearity, 23, no
4, (2010), 793--808.

\bibitem{BZ} Zwiebach, B., \emph{A first course in String Theory},
Cambridge University Press, 2009.

\end{thebibliography}
\end{document}